\numberwithin{equation}{section}
\newcommand{\R}{\mathbb{R}}
\newcommand{\T}{\mathbb{T}}
\newcommand{\Z}{\mathbb{Z}}
\newcommand{\N}{\mathbb{N}}
\newcommand{\cH}{\mathcal{H}}
\newcommand{\Lip}{\operatorname{Lip}}
\def\lbl{\label}
\def\be{\begin{equation}}
\def\ee{\end{equation}}
\newcommand{\p}{{\partial}}
\def\1{\mathbf{1}}
\newtheorem{theorem}{Theorem}[section]
\newtheorem{lemma}[theorem]{Lemma}
\newtheorem{remark}[theorem]{Remark}
\newtheorem{definition}[theorem]{Definition}
\newtheorem{example}[theorem]{Example}
\DeclareMathOperator{\esssup}{ess\,sup}
\title{Galerkin method for nonlocal diffusion equations on self-similar domains
}
\date{\today}
\author{
Georgi S. Medvedev\thanks{Department of Mathematics, 
Drexel University, 3141 Chestnut Street, Philadelphia, PA 19104,
{\tt medvedev@drexel.edu}}
}
\begin{document}
\maketitle
\begin{abstract}
  Integro--differential equations, analyzed in this work, comprise an important
  class of models of continuum media with nonlocal interactions. Examples include peridynamics,
  population and opinion dynamics, the spread of disease models, and nonlocal diffusion, to name a few.
  They also arise naturally as a continuum limit of interacting dynamical systems on networks.
  Many real-world networks, including  neuronal, epidemiological, and information networks,
  exhibit self-similarity, which translates into self-similarity of the spatial domain of
  the continuum limit.

  For a class of evolution equations with nonlocal interactions on self-similar domains, we construct
  a discontinuous Galerkin method and develop a framework for studying its convergence.
  Specifically, for the model at hand, we identify a natural scale of function spaces,
  which respects self-similarity of the spatial domain, and estimate the rate of convergence
  under minimal assumptions on the regularity of the interaction kernel. The analytical
  results are illustrated by numerical experiments on a model problem.
\end{abstract}

\section{Introduction}
\setcounter{equation}{0}

For a class of interacting dynamical systems on a convergent graph sequence, one can write
a continuum limit in the form of a nonlocal diffusion equation \cite{Med14a, Med14b, Med19}
\be\lbl{nloc}
\p_t u(t,x) = f(t,u) + \int_{[0,1]} W(x,y) D\left(u(t,y), u(t,x)\right) dy, \quad x\in [0,1],
\ee
where $W\in L^2([0,1]^2)$ and $f, D$ are sufficiently regular functions. As an example, consider
the Kuramoto model of coupled phase oscillators with nonlocal nearest-neighbor coupling
\be\lbl{KM}
\dot u_i = \omega +\frac{1}{n} \sum_{j=i-k}^{i+k} \sin\left( 2\pi\left(u_j-u_i\right)\right),
\quad i \in \Z_n\doteq \Z/n\Z,
\ee
where  $k=\lfloor rn\rfloor$ with $r\in (0, 1/2)$ and $u_i:\R\to\T\doteq \R/\Z$ is the phase of oscillator $i$. 
In \cite{WilStr06}, in the limit as $n\to \infty$ \eqref{KM} was approximated by
\be\lbl{cKM}
\p_t u(t,x) = \omega +\int_\T K(x-y) \sin\left( 2\pi\left( u(t,x)-u(t,y)\right)\right) dy,\quad x\in\T,
\ee
where $K(x)$ is a $1-$periodic function equal to $\1_{|x|\le r}$ for $|x|\le 1/2$.

\begin{figure}
	\centering
 \textbf{a}\;	\includegraphics[width =.45\textwidth]{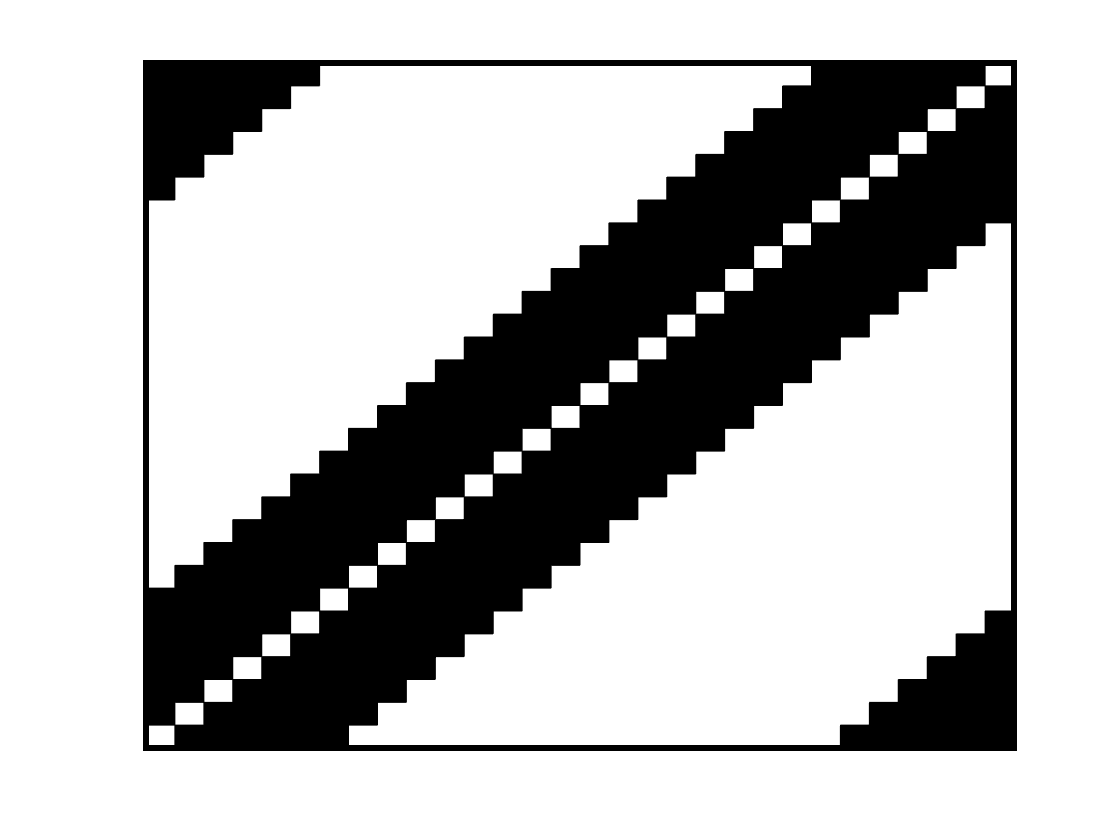}\qquad
  \textbf{b}\;      \includegraphics[width = .45\textwidth]{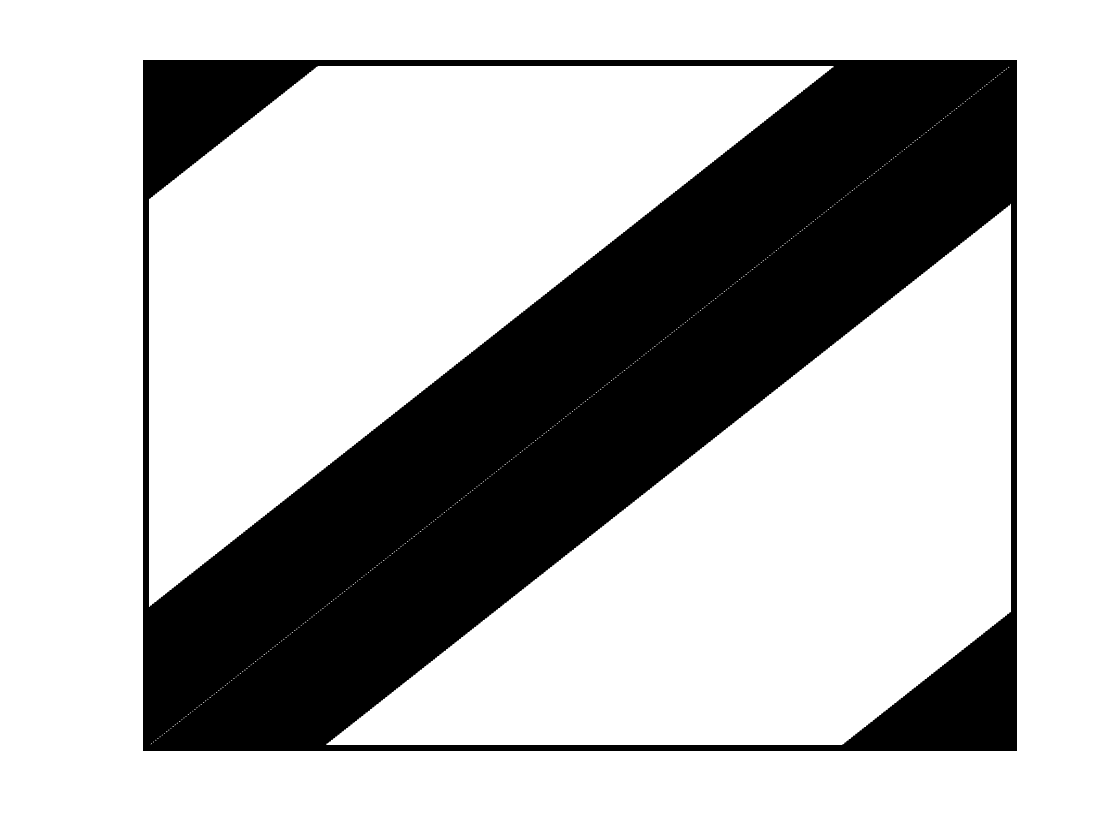}
	\caption{Support of $W^n$ (see \eqref{adjWn}) and that of $W$, the limit of $W^n$ as $n\to\infty$.}
	\label{f.1}
\end{figure}

The geometric basis of such approximation is clear from the structure
of the adjacency matrix of the $k$-nearest-neighbor graph
$$
a_{i,j}=\1_{\{d_n(i,j)\le \lfloor rn\rfloor\}}, \; d_n(i,j)=\min\{|i-j|, n-|i-j|\}.
$$
Representing this matrix by a step function on the unit square
\begin{equation}\label{adjWn}
W^n(x,y)= \sum_{i,j=1}^n a^n_{ij} \1_{\left[\frac{i-1}{n}, \frac{i}{n}\right)\times \left[\frac{j-1}{n},\frac{j}{n}\right)}(x,y),
\end{equation}
it is easy to see that as $n\to\infty$, the support of $W^n$ tends to the
support of the indicator function of $W$, the kernel in
the continuum
limit \eqref{nloc} (see Figure~\ref{f.1}). Thus, formally one expects that the sum on the right--hand side of \eqref{KM}
is transformed into the integral
on the right--hand side of the continuum limit \eqref{cKM}. A more careful analysis reveals that the relation
between
the discrete model and the continuum limit \eqref{nloc} can be interpreted with the help of the Galerkin
scheme
\cite{Med14a}.  Specifically, given the continuum limit \eqref{nloc} we discretize it by projecting onto
a finite--dimensional
subspace of the phase space $\cH\doteq L^2([0,1])$,
$\cH^n\doteq\operatorname{span}\{ \1_{\left[\frac{i-1}{n}, \frac{i}{n}\right)},\; i\in [n]\}$.
To this end, approximate $u$ by
$$
u^n(t,x) =\sum_{i=1}^n u^n_i(t) \1_{\left[\frac{i-1}{n}, \frac{i}{n}\right)}(x),
$$
plug this in \eqref{cKM} and take the inner product with $\1_{\left[\frac{i-1}{n}, \frac{i}{n}\right)}$ to obtain
\be\lbl{disc}
\dot u^n_i= f(t, u^n_i) + \frac{1}{n} \sum_{j=1}^n W^n_{ij} D(u^n_j, u_i^n), \qquad i\in [n],
\ee
where
\be\lbl{Wij}
W^n_{ij}=n^2\int_{ \left[\frac{i-1}{n}, \frac{i}{n}\right)\times\left[\frac{j-1}{n},
\frac{j}{n}\right) } W dx =\fint_{\left[\frac{i-1}{n}, \frac{i}{n}\right)} W dx.
\ee
Here and below, for $f\in L^1(Q,\mu)$, $\fint_Q f(x)d\mu(x)$ stands for $\mu(Q)^{-1} \int_Q f(x)d\mu(x)$.

It is instructive to rewrite \eqref{disc} as a PDE on $[0,1]$:
\be\lbl{d-nloc}
\p_t u^n(t,x) = f(t,u^n) + \int_{[0,1]} W^n(x,y) D\left(u^n(t,y), u^n(t,x)\right) dy, \quad x\in [0,1],
\ee
where
\be\lbl{Wn}
W^n(x,y)=\sum_{i,j=1}^n W^n_{ij} \1_{\left[\frac{i-1}{n}, \frac{i}{n}\right)\times\left[\frac{j-1}{n}, \frac{j}{n}\right) } (x,y).
\ee
Thus, the justification of the continuum limit can be rephrased as a convergence problem for
the Galerkin scheme for \eqref{nloc}. Standard estimates yield the following bound for the error of the Galerkin
scheme for the problem at hand (cf.~\cite{Med14a})
\be\lbl{error-nloc}
\sup_{t\in [0,T]} \| u(t,\cdot)-u^n(t,\cdot)\|_{L^2([0,1])}\le C\left( \|W-W^n\|_{L^2([0,1]^2)} +
  \| u(0,\cdot)-u^n(0,\cdot)\|_{L^2([0,1])}\right).
\ee
Thus, the accuracy of the continuum limit is determined by the accuracy of approximation of the graphon $W$ by its
$L^2$--projection onto $\cH^n$. For $W$, as a limit of a graph
sequence, the only natural regularity assumption is measurability
or integrability in case of the $L^p$--graphons \cite{BCCZ19}. For $W\in L^p([0,1]^2),$ $p\ge 1$,
by construction, $W^n\to W$
a.e. and in $L^p$ (see, e.g., \cite[Proposition~2.6]{Cha17}).
 However, in the absence of additional assumptions, the  convergence may be arbitrarily slow as the following
example from \cite{Med14a}
shows.
\begin{figure}
	\centering
 % \textbf{a}\;	\includegraphics[width =.45\textwidth]{SG-level-7.pdf}\qquad
 % \textbf{b}\;      \includegraphics[width = .45\textwidth]{SG-level-2.pdf}
% \textbf{a}\;	\includegraphics[width =.45\textwidth]{f2a.pdf}\qquad
% \textbf{b}\;      \includegraphics[width = .45\textwidth]{f2b.pdf}
% \textbf{a}\;	\includegraphics[width=2.5cm, height=2.0cm]{F/SG-fractal.pdf}
% \textbf{b}\;      \includegraphics[width=11cm, height=2.0cm]{F/SG-graphs.pdf}
\textbf{a}\;\includegraphics[width=5.0cm]{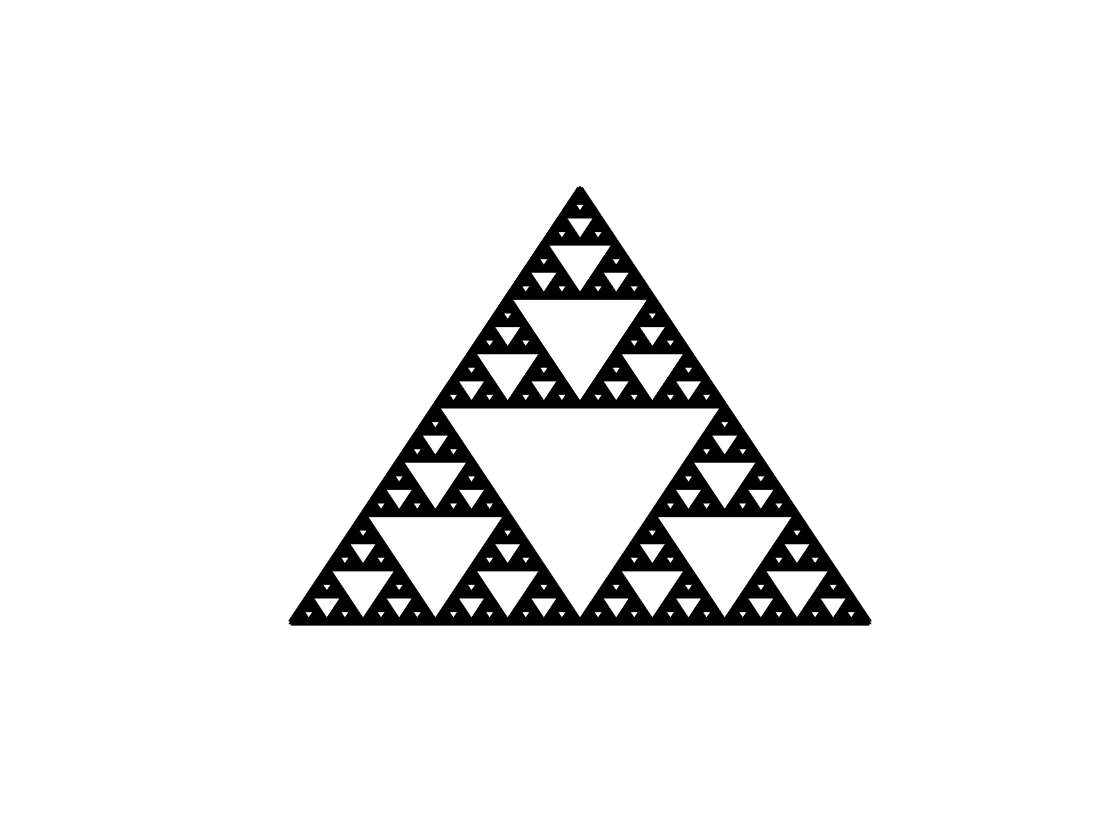}    
\textbf{b}\;\includegraphics[width=5.0cm]{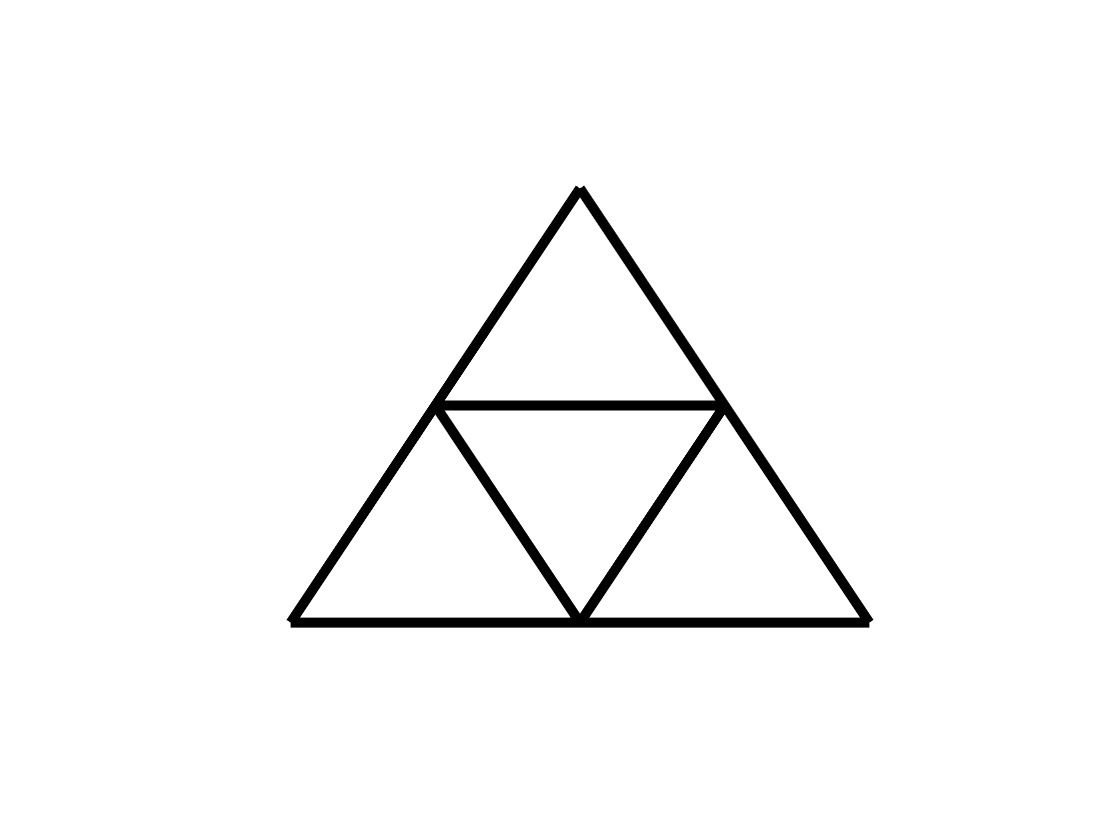}
\textbf{c}\;\includegraphics[width=5.0cm]{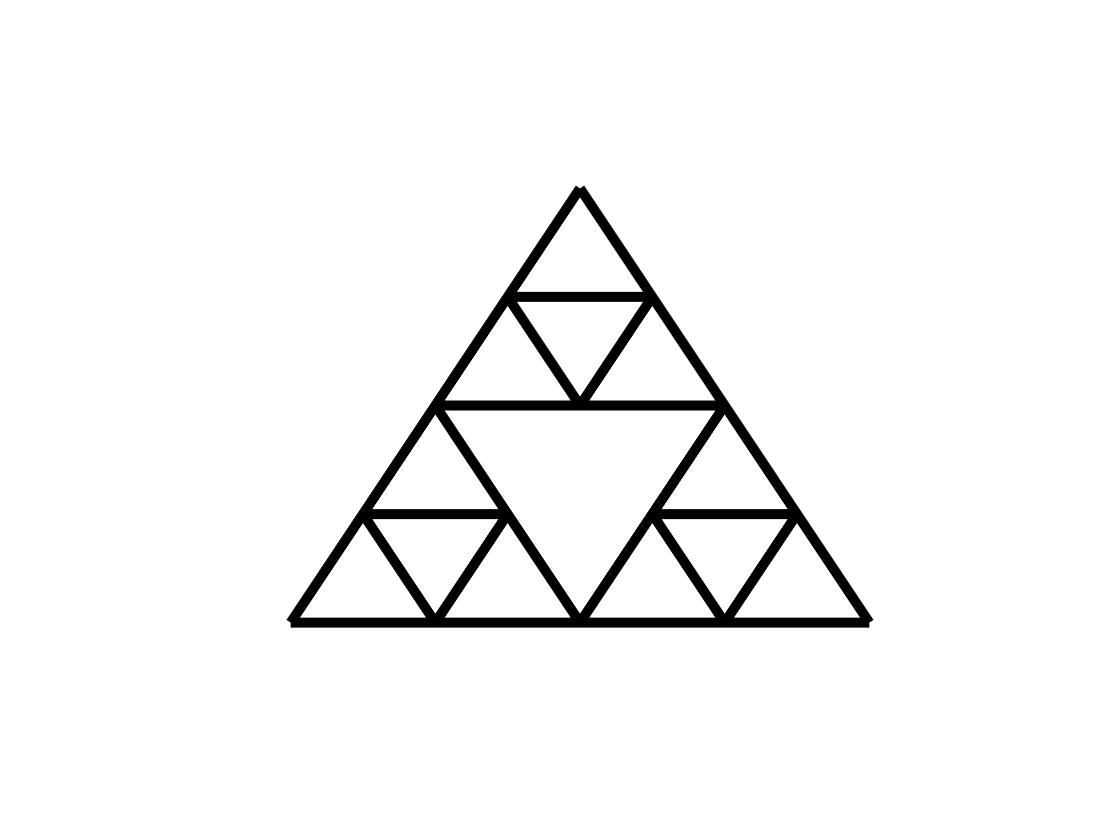}    
	\caption{\textbf{a})~ST.\;\; \textbf{b,c})~Graphs approximating ST.}
	\label{f.2}
\end{figure}

Consider $W: [0,1]^2\to \{0,1\}$ and denote by $W^+$ and $\p W^+$, the support of $W$ and its boundary respectively.
It is not hard to see that in this case
\be\lbl{error-01}
\|W-W^n\|^p_{L^p([0,1]^2)}\le  N(\p W^+, n)n^{-2},
\ee
where $N(\p W^+,n)$ is the number of discrete cells 
$\left[\frac{i-1}{n}, \frac{i}{n}\right)\times\left[\frac{j-1}{n}, \frac{j}{n}\right), i,j\in [n],$ that intersect $\p W^+$.
Let $\gamma$ stand for the upper box counting dimension $N(\p W^+,n)$ (cf.~\cite{Falc-FracGeom}),
$$
\gamma=\varlimsup_{n\to\infty} \frac{\log N(\p W^+,n)}{\log n}.
$$
Then for any $\epsilon>0$, we have
\be\lbl{bound-N}
N(\p W^+,n)\le n^{\gamma +\epsilon}.
\ee
By plugging \eqref{bound-N} into \eqref{error-01}, we have
\be\lbl{final-01}
\|W-W^n\|_{L^p([0,1]^2)}\le n^{\frac{-2+\gamma+\epsilon}{p} }.
\ee
The bound on the error of approximation suggests that the convergence can be in principle arbitrarily slow 
if the fractal dimension of the boundary of support of $W$ is sufficiently close to $2$.

This leads to the following question: What are the natural  assumptions on $W$ just beyond integrability
that allow to deduce the rate of convergence of $W^n$ to $W$?  In \cite{KVMed22}, the  generalized Lipschitz
spaces (cf.~\cite{Nikol-approximation}) were suggested for this role:
\begin{equation}\label{Lip-space}
\Lip\left( L^p(Q),\alpha\right) = \left\{
    f\in L^p(Q):\quad \omega_p (f,\delta)\le C \delta^\alpha \right\},\quad
  0<\alpha\le 1.
  \end{equation}
  Here, the $L^p$--modulus of continuity $\omega_p(f,\delta)$ is  defined as follows
  \begin{equation}\label{modulus}
  \omega_p(f,\delta) =\sup_{|h|\le \delta} \| f(\cdot)-f(\cdot+h) \|_{ L^p(Q_h^\prime)},\qquad
    Q_h^\prime =\{x \in Q: \quad x+h\in Q \},
  \end{equation}
and $|x|$ stands for the $\ell^\infty$--norm of $x\in\R^d$, and $Q$ is a given domain.

     $\Lip\left( L^p(Q),\alpha\right)$ is equipped with the norm
     \begin{equation}\label{Lip-norm}
       \|f\|_{\Lip\left( L^p(Q),\alpha\right)}=\sup_{\delta>0} \delta^{-\alpha}\omega_p(f,\delta).
     \end{equation}
 %    For the remainder of this section, for simplicity we restrict to $Q=[0,1]^d$.

    For $f\in\Lip(L^p([0,1]^d),\alpha)$, $p\ge 1$, it was shown \cite{KVMed22, MedSim22}
    \be\lbl{rate-Lip}
    \|f-f^n\|_{L^p([0,1]^d)} =O( n^{-\alpha}),
    \ee
    where $f^n$ is the $L^2$-projection of $f$ onto a subspace of piecewise constant functions
  corresponding to the rectangular discretization of $[0,1]^d$ with step $n^{-1}$.
  The proof of the estimate \eqref{rate-Lip} in \cite{KVMed22}, which is included
in the appendix to this paper for completeness,
  uses a dyadic discretization of $[0,1]^d$.
  It turns out that it extends naturally to a large class of self--similar sets.
  Specifically, in this paper we prove a counterpart of \eqref{rate-Lip} for functions defined
   on attractors of Iterated Function Systems (IFSs) (cf.~\cite{Falc-Tech}), which include
     Sierpinski Triangle (ST),
    Sierpinski Carpet (SC), and hexagasket, to name a few fractals.
    Extending Galerkin method to nonlocal equations
    on fractal domains  and obtaining rate of convergence estimate similar to \eqref{rate-Lip}
    requires suitable adaptations of the Galerkin scheme and
    the generalized Lipschitz spaces to functions on self-similar domains. This is the main contribution
    of this work.

   % After a brief review of the convergence analysis of the Galerkin method for a nonlocal problem
   % on a unit $d$--cube (\S~\ref{sec.Euc}), we turn to the analysis of the fractal case. 
The outline of this paper is as follows.
In Section~\ref{sec.selfsim}, we collect basic facts about attractors of IFSs following \cite{Falc-Tech}
    and formulate a nonlocal diffusion equation on a self--similar domain 
\be\lbl{nfrac}
    \p_t u(t,x) = f(t,u) + \int_{K} W(x,y) D\left(u(t,y), u(t,x)\right) d\mu(y), \quad x\in K,
    \ee
    where compact $K\subset\R^d$ is an attractor of an IFS \cite{Hut81, Falc-Tech}
    equipped with self-similar probability measure $\mu$ \cite{Hut81, Falc-Tech, Kig01}).
    % The definitions of $K$ and $\mu$ will be given below.
    After that we develop a
    Galerkin scheme for the nonlocal equation on self--similar sets (\S~\ref{sec.Galerkin}).
     In Section~\ref{sec.frac},
    we estimate the rate of convergence of the $L^2$--projection of an $L^p$-function on a self--similar set, which
    is the key for estimating convergence of the Galerkin scheme.
    In Section~\ref{sec.integrate}, we review numerical
    methods for integrating functions on fractals, which is needed for the implementation of the Galerkin method.
    We conclude with a numerical example for a model problem on a ST in Section~\ref{sec.example}.

\section{The nonlocal equation on self-similar domains}\label{sec.selfsim}
\setcounter{equation}{0}

In this section, we extend the nonlocal equation \eqref{nloc} to fractal domains.
First, we review the IFSs to the extent needed to formulate
the assumptions on the spatial domain.  We follow \cite[Chapter~1]{Kig01} (see also \cite[\S~2.2]{Falc-Tech}).
After that we formulate the initial value problem and develop a Galerkin scheme
for the counterpart of \eqref{nloc} on a self--similar domain.

\subsection{Iterated function system}\label{sec.IFS}
Let $(X,\mathbf{d})$ be  a complete metric space and let $\{ F_1, F_2,\dots, F_d\}$  be a set of contractions on $X$
\begin{equation}\label{ifs}
\mathbf{d}\left(F_i(x), F_i(y)\right)\le \lambda_i \mathbf{d}(x,y), \quad x,y\in X,\; 0<\lambda_i<1, \; i\in [d].
\end{equation}
A unique compact $K\subset X$ satisfying
\begin{equation}\label{selfsim}
  K=\bigcup_{i=1}^d F_i(K)
\end{equation}
is called  an  attractor of IFS \eqref{ifs} (see \cite[Theorem~2.6]{Falc-Tech}).
The topological structure of $K$ is understood with the help of the symbolic space $\Sigma=S^\N$,
where $S=\{1,2,\dots, d\}$. On $\Sigma$, we define the shift map
$$
\Sigma \ni w=w_1w_2w_3\dots \; \mapsto \sigma(w)=w_2 w_3\dots\in\Sigma.
$$
The $i$th branch of the inverse of $\sigma$ is denoted by
$$
\sigma^{-1}_i(w_1w_2w_3\dots)=iw_1w_2w_3\dots ,\qquad i\in S.
$$
For a fixed $r\in (0,1)$, $\delta_r(w, v)=r^{\min\{m:\, w_m\neq v_m\}}$  defines a metric on $\Sigma$.
$(\Sigma, \delta_r)$ is a compact metric space (cf.~\cite[Theorem~1.2.2]{Kig01}).

Let $\pi:~\Sigma\rightarrow K$ be defined by
\begin{equation}\label{natural}
  \Sigma\ni w \mapsto \pi(w)=\bigcap_{k=1}^\infty F_{w_1w_2\dots w_k}(K)\,\in K,
\end{equation}
where  $F_w\doteq F_{w_1}\circ F_{w_2}\circ\dots\circ F_{w_m},\; w=w_1w_2\dots w_m.$
$\pi$ is a continuous surjective map \cite[Theorem~1.2.3]{Kig01}.

Bernoulli measure with weights
\begin{equation}\label{B-weights}
\mathbf{p}=(p_1,p_2,\dots, p_d),\qquad \sum_{i=1}^d p_i=1,\; p_i>0,
\end{equation}
is defined on cylinders $C_{c_1,c_2,\dots,c_l}=\{(w_1,w_2,\dots) \in\Sigma:\; w_{1}=c_1, w_{2}=c_2, \dots, w_{l}=c_l\} $,
$c_i\in S$, 
\begin{equation}\label{cylinders}
\nu(C_{c_1,c_2,\dots,c_p})=p_{c_1}p_{c_2}\cdot\dots\cdot p_{c_l},
\end{equation}
and is extended to a unique measure on $\left(\Sigma,\mathcal{B}\right)$. Here, $\mathcal{B}$ is a
Borel $\sigma$-algebra in $\Sigma$ generated by the collection of cylinders.

The Bernoulli measure can be characterized as a unique Borel regular probability measure on
$\Sigma$ that satisfies
$$
\nu(A) = \sum_{i\in S} p_i \nu\left( \sigma_i A\right)
$$
  for any Borel set $A\subset \Sigma$ \cite{Kig01}.

 A pushforward of $\nu$ yields a self--similar measure on $K$:
\begin{equation}\label{push}
  \mu(A)=\nu\left(\pi^{-1} A\right),\quad A\in\mathcal{N}=\{A\subset K:\; \pi^{-1}A\in \mathcal{M}\}.
\end{equation}
  We suppress the dependence of $\nu$ and $\mu$ on $\mathbf{p}$ to avoid overcomplicated notation.

  Theorem~1.4.5 in \cite{Kig01} gives necessary and sufficient conditions for
  \begin{equation}\label{mu-cylinder}
    \mu(K_w)=p_{w_1} p_{w_2}\dots p_{w_m},\qquad K_w=F_w(K), \; w=w_1w_2\dots w_m,
  \end{equation}
 where as before $F_w$ stands for $F_{w_1}\circ F_{w_2}\circ\dots\circ F_{w_m}$.
 These conditions are usually straightforward to check in practice.
  For instance, \eqref{mu-cylinder} holds if
  $\pi^{-1}(x)$ is a finite set for any $x\in K$ (cf.~\cite[Corollary~1.4.8]{Kig01}).
  Instead of stating the conditions from \cite[Theorem~1.4.5]{Kig01}, we postulate
  \eqref{mu-cylinder} as an assumption on $K$ as an attractor of an IFS. Alternatively, one
  can impose the Open Set Condition (see \cite{Falc-Tech}).
  
An important implication of \eqref{mu-cylinder} is the following property
  \begin{equation}\label{overlap}
    \mu\left(K_{wi}\bigcap K_{wj}\right)=0, \quad i\neq j, |w|=m, 
    \end{equation}
    for any $m=0,1,2,\dots$.
   Measure $\nu$ is ergodic with respect to the measure preserving shift map $\sigma$.
 We conclude our review of IFSs with a few examples of self--similar sets.  
   
    \begin{example}\label{ex.fractals}
      The following are three representative examples of self--similar sets.
      \begin{description}
      \item[(ST)]
        We begin with  the definition of ST.
        Let $X=\R^2$ and let   $v_1, v_2, v_3$ be vertices of an equilateral triangle
(see Fig.~\ref{f.2}\textbf{a}) and
$$
F_i=2^{-1}(x-v_i)+v_i,\; i\in [3].
$$
Then \eqref{selfsim} defines ST. The self-similar probability measure in this case satisfies
$$
  \mu(B) = \sum_{i=1}^2 \frac{1}{3} \mu\left(F^{-1}_i(B)\right),
$$
for every Borel $B\subset K$. Here, we took $p_1=p_2=p_3=1/3$. With this choice of $\mathbf{p}$, $\mu$ is
called a standard self--similar measure on $ST$ (cf.~\cite{Kig01}).
\item[(SC)]
  Let $X=\R^2$ and the set of $v_i,\;i\in[8]$ consists of $(0,0), (0,0.5), (0,1), (1, 0.5),
  (1,1), (0.5,1), (0,1), (0,0.5),$ $F_i=\frac{1}{3} (x+2v_i) $. This yields Sierpinski carpet
  (SC).
\item[(UC)]
Let $X=\R^d,$\, $v_i\in\{0,1\}^d, \; i\in [2^d],$ are the vertices of the unit square, and
$$
F_i=2^{-1}(x-v_i)+v_i,\; i\in [2^d].
$$
Then \eqref{selfsim} defines the $d$-cube. The self-similar measure in this case is the
Lebesgue measure. Thus, the IFS model naturally combines both Euclidean and fractal
sets as possible domains for the nonlocal diffusion equation.
  \end{description}
\end{example}

\subsection{The nonlocal equation}\label{sec.nloc}

Let $K$ be an attractor of an IFS equipped with a self--similar  probability
measure $\mu$ and consider 
\begin{align}\label{heat}
  \p_tu(t,x)& =f(t,u)+\int_K W(x,y) D\left(u(t,x),u(t,y)\right)d\mu(y), \\
  \label{heat-ic}
  u(0,x)& =g(x),\qquad x\in K,
\end{align}
where $W\in L^2(K\times K, \mu\times\mu)$, $g\in L^2(K,\mu)$ and $f, D$ are
subject to the following conditions.
We assume
 \be\lbl{bound-W}
 \max\left\{ \esssup_{x\in K} \int_K |W(x,y)| d\mu(y), \;
  \esssup_{y\in K} \int_K |W(x,y)| d\mu(x)\right\}<\infty.
\ee
Functions $f(t,u)$ and $D(u,v)$ are jointly continuous and 
\begin{align}\lbl{Lip-f}
  |f(t,u)-f(t,u^\prime)|\le L_f |u-u^\prime|,& \quad \forall t\in\R, \; u, u^\prime\in K,\\
  \lbl{Lip-D}
  |D(u,v)-D(u^\prime,v^\prime)|\le L_D \left(|u-u^\prime| +|v-v^\prime|\right),& \quad \forall
                                                                                 u,v, u^\prime, v^\prime\in K,
\end{align}
where $L_f$ and $L_D$ are positive Lipschitz constants.
In addition,
\be\lbl{bound-D}
\sup_{K\times K} |D(u,v)|\le 1.
\ee

\begin{theorem}\label{thm.well}
  Suppose \eqref{bound-W}-\eqref{bound-D} hold and $g\in L^2(K,\mu)$. Then for any $T>0$,
  the IVP \eqref{heat}-\eqref{heat-ic} has a unique solution $u\in C^1\left(0, T; L^2(K,\mu)\right)$.
\end{theorem}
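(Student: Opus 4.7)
The plan is to recast the Cauchy problem \eqref{heat}-\eqref{heat-ic} as the Volterra fixed-point equation
\[
u(t,x) = g(x) + \int_0^t \mathcal{N}[s,u(s,\cdot)](x)\,ds, \qquad
\mathcal{N}[s,\phi](x) := f(s,\phi(x)) + \int_K W(x,y)\,D(\phi(x),\phi(y))\,d\mu(y),
\]
and to apply the Banach contraction principle on the complete metric space $C([0,T_0]; L^2(K,\mu))$ for $T_0 > 0$ sufficiently small. Because the Lipschitz constant obtained below will be independent of the iterate, the standard continuation argument then extends the unique local solution to all of $[0,T]$.

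Two estimates drive the argument. First, $\mathcal{N}[s,\cdot]$ maps $L^2(K,\mu)$ into itself: the $f$-term is controlled by \eqref{Lip-f} (giving linear growth in $\phi$), while for the nonlocal term \eqref{bound-D} together with the row bound in \eqref{bound-W} yields the a.e.\ pointwise estimate $\bigl|\int_K W(x,y) D(\phi(x),\phi(y))\,d\mu(y)\bigr| \le \esssup_{x\in K}\int_K |W(x,y)|\,d\mu(y) < \infty$, so the nonlocal piece lies in $L^\infty(K,\mu) \subset L^2(K,\mu)$. Second, Lipschitz continuity. I would split
\[
D(\phi(x),\phi(y)) - D(\psi(x),\psi(y)) = \bigl[D(\phi(x),\phi(y))-D(\psi(x),\phi(y))\bigr] + \bigl[D(\psi(x),\phi(y))-D(\psi(x),\psi(y))\bigr]
\]
and invoke \eqref{Lip-D}, so that $\mathcal{N}[s,\phi]-\mathcal{N}[s,\psi]$ decomposes into (i) a pointwise-in-$x$ multiplication by $|\phi(x)-\psi(x)|$, whose $L^2(K,\mu)$-norm is controlled by the row bound in \eqref{bound-W}, and (ii) the integral operator with kernel $|W(x,y)|$ applied to $|\phi(y)-\psi(y)|$, which is bounded on $L^2(K,\mu)$ via the Schur test (taking the constant $1$ as test function) using \emph{both} bounds in \eqref{bound-W}. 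Combining these with \eqref{Lip-f} yields $\|\mathcal{N}[s,\phi]-\mathcal{N}[s,\psi]\|_{L^2(K,\mu)} \le L\,\|\phi-\psi\|_{L^2(K,\mu)}$ with $L$ uniform in $s\in[0,T]$ and in $\phi,\psi$.

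Given this uniform Lipschitz bound, choosing $T_0 < L^{-1}$ makes the Volterra operator a strict contraction on $C([0,T_0]; L^2(K,\mu))$, producing a unique fixed point, and iterating on successive intervals of length $T_0$ extends this to the unique solution on $[0,T]$. Joint continuity of $f$ and $D$ together with continuity of $t\mapsto u(t,\cdot)$ in $L^2(K,\mu)$ implies that the integrand in the Volterra formulation is continuous in $t$ as an $L^2(K,\mu)$-valued map, and the fundamental theorem of calculus then upgrades $u$ to $C^1(0,T; L^2(K,\mu))$.

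The only genuinely nontrivial ingredient is the Schur-test estimate for the integral operator with kernel $|W(x,y)|$ on the abstract measure space $(K,\mu)$; everything else is a routine Picard-Lindel\"{o}f argument. Notably, neither the self-similar structure of $K$ nor the specific form of $\mu$ enters this proof, only the integrability hypothesis \eqref{bound-W} together with \eqref{Lip-f}-\eqref{bound-D}; the self-similar geometry will play its role only later, when one seeks rates of convergence of the Galerkin scheme.
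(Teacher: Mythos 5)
Your argument is correct and is essentially the proof the paper intends: the paper simply defers to \cite[Theorem~3.1]{KVMed17}, whose proof is exactly this Picard--Lindel\"{o}f scheme --- a contraction on $C([0,T_0];L^2)$ driven by the Lipschitz bounds \eqref{Lip-f}, \eqref{Lip-D} and the row/column kernel bounds \eqref{bound-W}, followed by continuation --- and, as you observe, nothing in it uses the self-similar structure of $(K,\mu)$. No substantive differences to report.
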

\begin{proof} The proof is the same as in \cite[Theorem~3.1]{KVMed17} modulo minor notational
  adjustments.
  \end{proof}

  \subsection{Galerkin scheme for a nonlocal equation on a self-similar domain}
  \label{sec.Galerkin}

Next, we construct a Galerkin approximation of \eqref{heat}, \eqref{heat-ic}. First, we 
discretize $K$. To this end, we build self--similar partitions that are naturally associated with
$K$.
  Let $w=(w_1, w_2,\dots, w_m)\in\Sigma_m,$ $w_i\in S$, be an itinerary of length $m=|w|$ and
  consider the following partition of $K$:
  \begin{equation}\label{partition}
    \mathcal{K}^m\doteq \left\{K_w:\;K_w=F_w(K),\; |w|=m\right\},\qquad
  K=\bigcup_{|w|=m} K_w.
\end{equation}
% where $F_w\doteq F_{w_1}\circ F_{2} \circ\dots\circ F_{w_m}$.
Using $\mathcal{K}^m$, we define a finite-dimensional subspace of $\cH\doteq L^2(K,\mu\times\mu)$,
$\cH^m=\operatorname{span} \{ \1_{A}:\; A\in \mathcal{K}^m\}.$ Here and below, $\1_A$ is the
indicator of $A$. 
 In general, $K_w\bigcap K_v$ may be nonempty for two distinct $w,v\in \Sigma_m$,
 %The sets in $\{K_w:\; w\in\Sigma_m\}$ can be easily  modified to make them disjoint.
 %However, this is not necessary, because
 but $\mu (K_w\bigcap K_v)=0$
thanks to \eqref{overlap}. The  overlap  in support of the basis functions of $\cH^m$ does not interfere with the
convergence of the $L^2$-projections of the elements of $\cH$ onto $\cH^m$.
% Thus, we  will continue to work with unmodified sets $K_w$. 

Next, we approximate the solution of the IVP \eqref{heat}, \eqref{heat-ic} by
a piecewise constant function:
\begin{equation}\label{sum}
  u^m(t,x) = \sum_{|w|=m} u_w(t) \1_{K_w}(x).
\end{equation}
After inserting \eqref{sum} into \eqref{heat}
and projecting onto $\cH^m$ we arrive at the following
discretization of \eqref{heat}:
\begin{equation}\label{galerkin}
  \dot u_w = f(u_w,t)+ \sum_{|v|=m} W_{wv} D(u_w, u_v) \mu(K_v), 
  \end{equation}
  where
  \be\lbl{project-W}
  W_{wv}=\fint_{K_w\times K_v} W(x) d(\mu\times\mu)(x), \quad |w|=|v|=m.
  \ee
  System of ODEs \eqref{galerkin} can be rewritten as
  \be\lbl{m-heat}
  \p_t u^m(t,x)= f(u^m, t) +\int_K W^m (x,y) D\left(u^m(t,x), u^m(t,y)\right) d\mu(y),
  \ee
  where
  \be\lbl{def-Wm}
  W^m=\sum_{|w|,|v|=m} W_{wv} \1_{K_{wv}}, \quad  K_{wv}\doteq K_w\times K_v.
  \ee
  Similarly we approximate the initial condition \eqref{heat-ic} by
  \begin{equation}\label{approx-g}
    g^m= \sum_{|w|=m} g_w \1_{K_w},  \quad g_w = \fint_{K_w} g d\mu.
    \end{equation}
    \begin{theorem} Consider the IVPs for \eqref{heat}, \eqref{m-heat} subject to initial conditions
      \eqref{heat-ic} and \eqref{approx-g} respectively. Then for a given $T>0$, there exists $C>0$
      not dependent  on $m\in\N$ such that 
    \be\lbl{L2-conv}
    \sup_{t\in [0,T]} \|u^m(t,\cdot) -u(t,\cdot)\|_{L^2(K,\mu)} \le C\left( \|W^m-W\|_{L^2(K\times K,\mu\times\mu)}+
      \|u^m(0,\cdot)-u(0,\cdot)\|_{L^2(K,\mu)}\right).
    \ee
  \end{theorem}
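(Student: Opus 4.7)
The plan is to set up a Gr\"onwall argument for the error $e^m(t,x)\doteq u(t,x)-u^m(t,x)$ in the $L^2(K,\mu)$ norm. Subtracting \eqref{m-heat} from \eqref{heat} gives a differential equation for $e^m$, and I will multiply by $e^m$ and integrate against $\mu$ to produce $\tfrac{1}{2}\tfrac{d}{dt}\|e^m(t,\cdot)\|_{L^2(K,\mu)}^2$ on the left. The Lipschitz estimate \eqref{Lip-f} on $f$ immediately contributes a term $L_f\|e^m\|_{L^2(K,\mu)}^2$.

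The heart of the argument is the decomposition of the nonlocal term. I would add and subtract $W^m(x,y)D(u(t,x),u(t,y))$ to split the integrand into
\[
(W-W^m)(x,y)\,D(u(t,x),u(t,y))\;+\;W^m(x,y)\bigl[D(u(t,x),u(t,y))-D(u^m(t,x),u^m(t,y))\bigr].
\]
For the first piece, multiplying by $e^m(t,x)$ and integrating on $K\times K$, the uniform bound \eqref{bound-D} together with Cauchy--Schwarz and $\mu(K)=1$ gives a contribution controlled by $\|W-W^m\|_{L^2(K\times K,\mu\times\mu)}\,\|e^m\|_{L^2(K,\mu)}$. For the second piece, the Lipschitz bound \eqref{Lip-D} produces terms of the form $|W^m(x,y)|\,(|e^m(t,x)|+|e^m(t,y)|)\,|e^m(t,x)|$, which after an elementary AM--GM inequality reduce to integrals of $|W^m|\,|e^m(\cdot)|^2$ in each variable separately.

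To close the estimate I need an analog of \eqref{bound-W} for $W^m$ that is uniform in $m$. This follows from the definition \eqref{project-W}, \eqref{def-Wm}: for $x\in K_w$, Jensen's inequality gives $\int_K|W^m(x,y)|\,d\mu(y)=\sum_{|v|=m}|W_{wv}|\mu(K_v)\le \mu(K_w)^{-1}\int_{K_w}\bigl(\int_K|W(x',y)|\,d\mu(y)\bigr)d\mu(x')$, which is bounded by the first essential supremum in \eqref{bound-W}; the symmetric bound handles the variable $y$. Combining these, the inequality reads
\[
\tfrac{1}{2}\tfrac{d}{dt}\|e^m(t,\cdot)\|_{L^2(K,\mu)}^2
\le C_1\|e^m(t,\cdot)\|_{L^2(K,\mu)}^2
+\|W-W^m\|_{L^2(K\times K,\mu\times\mu)}\,\|e^m(t,\cdot)\|_{L^2(K,\mu)},
\]
where $C_1$ depends only on $L_f,L_D$ and the bound in \eqref{bound-W}.

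Dividing by $\|e^m(t,\cdot)\|_{L^2(K,\mu)}$ (with the standard approximation argument to handle zeros) reduces the differential inequality to a linear one in $\|e^m(t,\cdot)\|_{L^2(K,\mu)}$, and Gr\"onwall's lemma on $[0,T]$ yields \eqref{L2-conv} with $C=C(T,L_f,L_D,W)$. The main obstacle I anticipate is the uniform boundedness of $W^m$ required to handle the second piece; everything else is a routine energy estimate once the decomposition is in place. Existence and uniqueness of $u$ are supplied by Theorem~\ref{thm.well}, and existence of $u^m$ on $[0,T]$ follows from the same result applied to the finite-dimensional system \eqref{galerkin}, since the Lipschitz and boundedness hypotheses pass to $W^m$ as shown above.
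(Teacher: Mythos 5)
Your proposal is correct and follows essentially the same route as the paper, which simply defers to the standard Gr\"onwall energy estimate for the Euclidean analogue in \cite{KVMed18, Med19}: subtract the equations, split the nonlocal term by adding and subtracting $W^m D(u,u)$, control the two pieces via \eqref{bound-D}, \eqref{Lip-D} and the $m$-uniform version of \eqref{bound-W} for $W^m$, and integrate the resulting differential inequality. Your Jensen-inequality verification that $W^m$ inherits the bound \eqref{bound-W} uniformly in $m$ is exactly the one point that needs checking in the self-similar setting (using $\mu(K_v\cap K_{v'})=0$ from \eqref{overlap} so that $\sum_{|v|=m}\int_{K_w\times K_v}|W|=\int_{K_w\times K}|W|$), and you have it right.
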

  \begin{proof} The proof follows the lines of the proof the analogous statment in the Euclidean setting
    (cf.~\cite{KVMed18, Med19}).
    \end{proof}

\section{Approximation of functions on self--similar domains} \label{sec.frac}
    \setcounter{equation}{0}

    In view of \eqref{L2-conv}, our next step is to study convergence of the $L^2$--projections for functions defined
    on self--similar sets.
   As will be clear from the analysis of this section, the convergence estimates in this case
   depend on the geometry of the domain in general. The solution is more transparent in the case of
   self-affine
   sets, which already contains many interesting domains.
   Thus, in this section we restrict to
   % $X=\R^n$ and
    $F_i:X\to X, i\in [d],$ are affine contracting maps with the same contraction constant
    \begin{equation}\label{F-affine}
      \left|F_i(x)-F_i(y)\right| = \lambda \left|x-y\right|,\qquad 0< \lambda<1, \;\;x,y\in X.
    \end{equation}
    To this end, let $K$ be an attractor of the IFS \eqref{F-affine}. Assume that $K$ is equipped with
    a self-similar
    measure $\mu$, defined as a pushforward of the Bernoulli measure with weights
    $\mathbf{p}=(p_1,p_2,\dots, p_d)$ (see \eqref{cylinders}).
    % The sets in Example~\eqref{ex.fractals} satisfy the assumptions of this section.
    
For $i,j\in S, \; i\neq j, $ define  $\tau_{ij}$ as a unique vector such that 
$$
 T_{\tau_{ij}}\left( F_i \left(K\right)\right)
=F_j(K),
$$
where $T_{\tau_{ij}}(x)\doteq x+\tau_{ij}.$

Before we continue, we need to modify the definition of the generalized Lipschitz spaces to make
them applicable to the analysis of functions on attactors of IFSs.
\begin{definition}\label{df.modulus}
  For $\phi\in L^p(K,\mu), \; p\ge 1,$ define the $L^p$-modulus of continuity as
  follows
  $$
  \omega_p(\phi, \lambda, m)= \sup_{\ell\ge m}\max_{i\neq j}
  \left\{  \|\phi(\cdot+\tau)-\phi(\cdot)\|_{L^p(K^\prime_{\tau}, \mu)}: \quad
     \tau=\lambda^{-(\ell+1)}\tau_{ij},  i,j\in S \right\}, 
  $$
  where 
  $
  K^\prime_{\tau}=\{ x\in K: \; x+\tau \in K\}.
  $

  Further, for $\alpha\in (0,1],$  define the generalized Lipschitz space
$$
\operatorname{Lip}\left( L^p(K,\mu),\alpha,\lambda \right)=\left\{ \phi\in L^p(K,\mu):\; \exists 
C>0 :\; \omega_p(\phi,\lambda, m)\le C\lambda^{\alpha m} \right\}
$$
equipped with the norm
$$
\|\phi\|_{\Lip\left(L^p(K,\mu),\alpha, \lambda \right)}\doteq \sup_{m\in\N} \lambda^{-\alpha m}\omega_p(\phi,\lambda, m).
$$
\end{definition}

% \begin{remark} For each of the domains in the above table, one can reduce the set of $\mathcal{T}=\{\tau_{ij}\}$
% in the definition of the modulus of continuity to just two vectors
%   $tau_1$ and $\tau_2$ such that for any pair $(i,j)\in I^2$ there exists $c_i,c_j\in\Z$:
%   $$
%   \tau_{ij} = c_i\tau_1 +c_j\tau_2.
%   $$
% \end{remark}

The following lemma is the key result of this work.
\begin{lemma} Let $\phi\in \operatorname{Lip}\left(\alpha, L^p(K,\mu)\right), \; p\ge 1$.
  Then
  \be\lbl{Lp-rate}
  \|\phi^m-\phi\|_{L^p(K,\mu)} \le
  \frac{ d^{1/p} }{1-\lambda^\alpha}
  \|\phi\|_{\Lip\left(L^p(K,\mu),\alpha, \lambda \right)} \lambda^{\alpha m}.
  \ee
\end{lemma}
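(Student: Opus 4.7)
The plan is to exploit the nested self--similar partitions $\{\mathcal{K}^{\ell}\}$ and expand the error as the telescoping sum
\begin{equation*}
\phi - \phi^m = \sum_{\ell=m}^{\infty}(\phi^{\ell+1}-\phi^{\ell}),
\end{equation*}
which converges in $L^p(K,\mu)$ because $(\phi^{\ell})$ is the conditional-expectation martingale associated with the filtration generated by $\mathcal{K}^{\ell}$. By the triangle inequality it suffices to prove the single-level estimate $\|\phi^{\ell+1}-\phi^{\ell}\|_{L^p(K,\mu)} \le d^{1/p}\|\phi\|_{\Lip(L^p(K,\mu),\alpha,\lambda)}\lambda^{\alpha\ell}$ for each $\ell\ge m$; summing the resulting geometric series then produces the prefactor $(1-\lambda^{\alpha})^{-1}$ in the stated bound.

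For the single-level estimate, fix $w$ with $|w|=\ell$. The self-similar identity $\mu(K_{wi})=p_wp_i$ yields both $\phi_w=\sum_{i\in S}p_i\phi_{wi}$ and $\phi^{\ell+1}-\phi^{\ell}=\sum_{i}(\phi_{wi}-\phi_w)\1_{K_{wi}}$ a.e.\ on $K_w$, so Jensen's inequality applied to $|\,\cdot\,|^p$ with weights $(p_j)$ gives
\begin{equation*}
\|\phi^{\ell+1}-\phi^{\ell}\|_{L^p}^p\le \sum_{|w|=\ell}\sum_{i,j\in S}p_wp_ip_j\,|\phi_{wi}-\phi_{wj}|^p.
\end{equation*}
The common contraction ratio gives $F_{wj}(z)-F_{wi}(z)=\lambda^{\ell}\tau_{ij}$ for every $z$; rewriting $\phi_{wi}-\phi_{wj}$ via the pushforward $\int_K g\circ F_{wi}\,d\mu=p_{wi}^{-1}\int_{K_{wi}}g\,d\mu$ and applying Jensen once more produces
\begin{equation*}
|\phi_{wi}-\phi_{wj}|^p\le p_{wi}^{-1}\int_{K_{wi}}|\phi(y)-\phi(y+\lambda^{\ell}\tau_{ij})|^p\,d\mu(y).
\end{equation*}
The Bernoulli weights now collapse through $p_wp_ip_j\cdot p_{wi}^{-1}=p_j$, the sum over $|w|=\ell$ exhausts the set $\bigcup_{|w|=\ell}K_{wi}\subset K'_{\lambda^{\ell}\tau_{ij}}$ (the inclusion follows from $y=F_{wi}(z)$ and $y+\lambda^{\ell}\tau_{ij}=F_{wj}(z)$ both lying in $K$), and $\sum_j p_j=1$ together with the $d$ choices of $i$ leaves exactly $d\,\omega_p(\phi,\lambda,\ell)^p\le d\,\|\phi\|_{\Lip}^p\lambda^{\alpha\ell p}$. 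Taking $p$-th roots and summing the geometric series over $\ell\ge m$ completes the argument.

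The main obstacle is the bookkeeping in the second paragraph: one has to thread the Bernoulli weights $(p_j)$ through the two Jensen steps and the pushforward so that the prefactor collapses precisely to $d^{1/p}$ (rather than the cruder $d$ one would get from a naive triangle inequality), and one has to verify the geometric fact that $\bigcup_{|w|=\ell}K_{wi}$ lies inside $K'_{\lambda^{\ell}\tau_{ij}}$ at the level being estimated. Beyond this, the telescoping and geometric summation are routine.
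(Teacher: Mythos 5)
Your proposal is correct and follows essentially the same route as the paper's proof: a telescoping sum over partition levels, the identity $\phi_w=\sum_i p_i\phi_{wi}$, translation of the cell $K_{wi}$ onto $K_{wj}$ by $\lambda^{\ell}\tau_{ij}$, and two applications of Jensen's inequality to reduce the single-level difference to the modulus of continuity, yielding the factor $d^{1/p}$ and the geometric series $(1-\lambda^{\alpha})^{-1}$. Your more careful threading of the Bernoulli weights (giving $\sum_i\sum_{j\neq i}p_j=d-1\le d$) and the explicit martingale justification of $\phi^{\ell}\to\phi$ in $L^p$ are minor refinements of the same argument.
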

\begin{remark}
  It is instructive to compare the proof this lemma to the proof of
   its Euclidean counterpart (see Lemma~\ref{lem.cube} in the Appendix). 
\end{remark}
\begin{proof}
Fix $m\ge 1$ and partition $K$ into $d^m$ subsets
$$
K_{w}=F_{w}(K), \quad w \in \Sigma_m.
$$

As in the proof of Lemma~\ref{lem.cube}, we represent $\phi^{m+1}$ as 
  \be\lbl{phi-m+1}
  \phi^{m+1}=\sum_{|w|=m} \sum_{j\in S} \phi_{wj} \1_{K_{wj}},
  \ee
  where
  $$
  \phi_{wj} = \fint_{K_{wj}} \phi d\mu.
  $$
  Likewise,
  \begin{align}
    \nonumber
    \phi^{m}& =\sum_{|w|=m} \sum_{j\in S} \phi_{w} \1_{K_{wj}}\\
    \lbl{phi-m}
            &= \sum_{|w|=m} \sum_{j\in S}    \sum_{k\in S}p_k\phi_{wk} \1_{K_{wj}},
  \end{align}
  where we used $\phi_w= \sum_{k\in S}p_k\phi_{wk}$.

              By subtracting \eqref{phi-m+1} from \eqref{phi-m}, we have
              \begin{align}\nonumber
                  \phi^m(x)-\phi^{m+1}(x) & =p_k
\sum_{w,j,k}                                                           \left(
                                                         \fint_{K_{wj}}-\fint_{K_{wk}}\right) \phi(z) dz \,\1_{K_{wj}}(x)\\
                \label{represent}
                   & =  \sum_{w,j,k} p_k
               \fint_{K_{wj}} \left[ \phi(y+\tau^m_{jk})- \phi(y) \right] d\mu(y) \,\1_{K_{wj}}(x),
 \end{align}
where             
$$
\sum_{w,j,k}\doteq \sum_{|w|=m}\sum_{j\in S} \sum_{S\ni k\neq j}\quad\mbox{and}\quad
\tau^m_{ij}=\lambda^m\tau_{ij},\quad i,j\in S, \; i\neq j.
$$
After raising both sides of \eqref{represent} to the $p$th power and integrating over $K$, we have
\begin{equation}\label{raise-p}
\int_K |\phi^m-\phi^{m+1}|^p d\mu  =   \sum_{w,j,k} p_k^p
\left|\fint_{K_{wj}} \left[ \phi(y+\tau^m_{jk})- \phi(y) \right] d\mu(y) \right|^p  \mu(K_{wj}).
\end{equation}
Using the Jensen's inequality, from \eqref{raise-p} we obtain
\begin{align*}
\int_K |\phi^m-\phi^{m+1}|^p d\mu & \le  \sum_{w,j,k}
p_k^p \fint_{K_{wj}} \left|\phi(y+\tau^m_{jk})- \phi(y) \right|^p d\mu(y)  \mu(K_{wj})\\
& =  \sum_{j,k} p_k^p\left(\sum_{w}
                          \int_{K_{wj}} \left| \phi(y+\tau^m_{jk})- \phi(y) \right|^p d\mu(y)\right) \\
 &\le   C(d,\mathbf{p},p)\omega_p^p(\phi,\lambda, m),
\end{align*}
where
$$
C(d,\mathbf{p},p) =\sum_j\sum_{k\neq j} p_k^p\le \sum_j (1-p_j^p)\le d.
$$
From this we conclude
$$
\|\phi^m-\phi^{m+1}\|_{L^p(K,\mu)} \le d^{1/p} \omega_p(\phi,\lambda, m).
$$

 For any integer $M>m$ we have
\begin{equation}\lbl{bound-dyadic+}
\begin{split}
\|\phi^m-\phi^{m+M}\|_{L^p(K,\mu)} &
\le \sum_{k=m}^\infty  \left\|\phi^k-\phi^{k+1}\right\|_{L^p(K,\mu)}\\
&\le d^{1/p}\|\phi\|_{\Lip\left(L^p(K,\mu), \alpha,\lambda \right)}
\frac{\lambda^{\alpha m}}{1-\lambda^\alpha}.
\end{split}
\end{equation}
By passing $M$ to infinity in \eqref{bound-dyadic+}, we get \eqref{Lp-rate}.
\end{proof}

\begin{remark}\label{rem.product}
  For \eqref{L2-conv}, we also need error estimate for $W\in L^2(K\times K,\mu\times\mu)$.
  To this end, we note that
  under the assumptions made in this section, $K\times K$ is a self-similar subset of $X\times X$
  generated by $d^2$
  contractions $F_{ij}=(F_i, F_j), \; i,j\in S$ with the same contraction constant $\lambda$ as before. Therefore,
  \eqref{Lp-rate} still holds after replacing $d$ by $d^2$.
  \end{remark}

  \section{Integration of functions on self--similar domains}\label{sec.integrate}
  \setcounter{equation}{0}
  Numerical implementation of the Galerkin scheme \eqref{galerkin} requires computation of the
  integrals of the form
  \begin{equation}\label{integrals}
  \fint_{K_{wv}} W d(\mu\times\mu) \quad \mbox{and}\quad \fint_{K_w} gd\mu \qquad w,v\in\Sigma_m.
  \end{equation}
  Since $\mu$ is a self-similar measure defined as a pushforward of the Bernoulli measure,
a few comments on practical evaluation of \eqref{integrals} are in order.

  In this section, we review three algorithms of numerical evaluation of \eqref{integrals}. They are
  based on two  sets of ideas.  The first one is based on ergodicity of $\mu$ \cite{Falc-Tech},
  which can be used to construct Monte-Carlo type approximations. The second set is related to
  uniform partitions that are naturally associated with self--similar sets \cite{InfVol09}.
  This leads to quasi-Monte-Carlo type methods. For the latter, we estimate
  the rate of convergence of numerical approximation of \eqref{integrals} assuming some regularity,
  for instance, H\"{o}lder
  continuity of the integrand.

  By the reasons explained in Remark~\ref{rem.product}, it is sufficient to discuss the problem
  of evaluation of \eqref{integrals} only for
  $\fint_{K_w}\phi d\mu$ with $\phi\in L^1(K,\mu)$. Furthermore, since
  $$ \frac{\mu (\cdot)}{\mu (K_w)} $$
  is a probability measure on $K_w$, without loss of generality we may consider
  \begin{equation}\label{evaluate}
  \int_K \phi d\mu, \qquad \phi\in L^1(K, \mu).
  \end{equation}
  Therefore, in the remainder of this section we will focus on the problem of evaluation of
  \eqref{evaluate}.
  To this end, let $K$ be an attractor of a system of contractions (cf.~\eqref{ifs}).
  % As an
  % ambient space we take $\R^n$ as in Section~\ref{sec.frac}. Any other complete metric space would work
  % as well.
  We assume that $F_i$'s satisfy the Open Set Condition (cf.~\cite{Hut81, Kig01}), which, in particular,
  implies \eqref{overlap} (cf.~\cite{Hut81}).

  The first algorithm relies on the following lemma.
  \begin{lemma}\label{lem.erg} Let $\phi\in L^1(K, \mu)$. Then
    \begin{equation}\label{erg}
\lim_{m\to\infty} \frac{1}{m} \sum_{j=0}^{m-1} \phi\left(\pi (\sigma^j s)\right) =\int_K \phi (x) d\mu(x).
      \end{equation}
    \end{lemma}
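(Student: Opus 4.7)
The plan is to reduce the statement to Birkhoff's ergodic theorem applied on the symbolic space $(\Sigma, \nu, \sigma)$, and then transport the conclusion back to $K$ via the projection $\pi$. Every ingredient needed for this reduction is already assembled earlier in the paper: $\nu$ is $\sigma$-invariant and ergodic, the self-similar measure is the pushforward $\mu = \pi_{*}\nu$, and $\pi$ is continuous (in particular, Borel-measurable).

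First I would lift the integrand: set $\psi = \phi \circ \pi : \Sigma \to \R$. Since $\mu = \pi_{*}\nu$, the change-of-variables formula gives
$$
\int_{\Sigma} |\psi|\, d\nu = \int_{\Sigma} |\phi \circ \pi|\, d\nu = \int_K |\phi|\, d\mu < \infty,
$$
so $\psi \in L^1(\Sigma,\nu)$ and $\int_\Sigma \psi\, d\nu = \int_K \phi\, d\mu$.

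Second, I would invoke Birkhoff's ergodic theorem for the measure-preserving, ergodic system $(\Sigma,\mathcal{B},\nu,\sigma)$ applied to $\psi \in L^1(\Sigma,\nu)$, which yields
$$
\lim_{m\to\infty} \frac{1}{m} \sum_{j=0}^{m-1} \psi(\sigma^j s) = \int_\Sigma \psi\, d\nu \qquad \text{for } \nu\text{-a.e. } s\in\Sigma.
$$
Finally, I would substitute $\psi(\sigma^j s) = \phi(\pi(\sigma^j s))$ on the left and the identity $\int_\Sigma \psi\, d\nu = \int_K \phi\, d\mu$ on the right to recover \eqref{erg} (the conclusion is understood in the $\nu$-a.e. sense in $s$, which pushes forward to $\mu$-a.e. in $\pi(s)\in K$).

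There is no substantive obstacle: the only mildly delicate point is to make explicit that the assertion is an almost-everywhere statement with respect to $\nu$ on the coding space, and to verify the ergodicity of $\nu$ under $\sigma$ — but the latter is the standard fact that a Bernoulli shift with weights $\mathbf{p}$ is ergodic, which the paper has already recorded as a property of $\nu$. All other steps are routine consequences of the pushforward structure \eqref{push} relating $\mu$ and $\nu$.
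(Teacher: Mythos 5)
Your proposal is correct and follows essentially the same route as the paper: lift $\phi$ to $\psi=\phi\circ\pi$ on the coding space, apply Birkhoff's ergodic theorem for the ergodic shift $(\Sigma,\nu,\sigma)$, and convert $\int_\Sigma \phi\circ\pi\, d\nu$ to $\int_K\phi\, d\mu$ via the pushforward identity \eqref{push}. Your explicit remarks that $\psi\in L^1(\Sigma,\nu)$ and that the convergence holds only for $\nu$-a.e.\ $s$ are worthwhile clarifications that the paper's terser proof leaves implicit.
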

    \begin{proof} Since $\nu$ is an ergodic probability measure, by the Ergodic Theorem
      (cf.~\cite[Theorem~6.1]{Falc-Tech}), we have
      \begin{align*}
        \lim_{m\to\infty} \frac{1}{m} \sum_{j=0}^{m-1} \phi\left(\pi (\sigma^j s) \right) &=\int_\Sigma \phi\circ\pi (y) d\nu (y)\\        
                                                                                         &=\int_K \phi (x) d\mu(x).
      \end{align*}
      \end{proof}

      Lemma~\ref{lem.erg} suggests the following algorithm for evaluating \eqref{evaluate}
      in the spirit of the Monte-Carlo method . 
 
    \noindent\textbf{Algorithm I.} 
    Let    $\phi\in L^1(K,\mu)$.
    \begin{description}
    \item[-] Generate a random string of length $2N$: $s=(s_1,s_2,\dots,s_{2N})\sim\operatorname{Uniform}~(\Sigma_{2N})$.
\item[-] Set
  $$
 S_N:= \frac{1}{N} \sum_{i=1}^N \phi\left(\pi(\sigma^{i-1}s)\right).
 $$
 \item[-] Use $S_N$ to approximate $\int_K \phi d\mu$.
\end{description}

Algorithm~I is essentially a Monte-Carlo method.
% convergence rate $O(N^{-1/2})$.
As usual with the Monte-Carlo schemes, the convergence is slow but it depends neither on the dimension of
the spatial domain nor does it require any regularity
of $\phi$ beyond integrability. For continuous functions, one can use a
quasi-Monte-Carlo type algorithm based on  uniform
sequences (cf.~\cite{InfVol09}).

\noindent\textbf{Algorithm II.} 
\begin{description}
\item[-] Pick $x_0\in K$.
\item[-] Compute $x_w=F_w(x_0), \quad w\in\Sigma_m$.
\item[-] Set $N=d^m$, $S_N:= \frac{1}{N} \sum_{|w|=m} \phi( x_w).$
\end{description}

\begin{lemma}\label{lem.quasi} For $f\in C(K)$,
  \begin{equation}\label{quasi}
\lim_{m\to\infty} \left|S_N-\int_K f\, d\mu\right|=0.
    \end{equation}
  \end{lemma}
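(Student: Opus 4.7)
The plan is to treat this as a Riemann-sum style argument adapted to the self-similar setting. The three ingredients are (i) uniform continuity of $f$ on the compact set $K$, (ii) the fact that the cells of the self-similar partition $\mathcal{K}^m = \{K_w : |w|=m\}$ have diameters decaying geometrically in $m$, and (iii) the overlap property \eqref{overlap}, which lets us split the integral cleanly over the partition. I will first interpret $S_N$ with the natural weighting $\mu(K_w)$ attached to each sample $x_w$; under the standard self-similar measure with $p_i = 1/d$ this coincides with the stated $N^{-1}\sum_w f(x_w)$.

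First, using \eqref{overlap}, I would decompose
$$
\int_K f\, d\mu = \sum_{|w|=m} \int_{K_w} f\, d\mu,
$$
and then write
$$
\left| S_N - \int_K f\, d\mu \right| \;\le\; \sum_{|w|=m} \int_{K_w} \bigl| f(x_w) - f(y) \bigr|\, d\mu(y).
$$
Second, I would bound the diameter of each $K_w$. Since $x_w = F_w(x_0)$ and every $y \in K_w$ has the form $y = F_w(z)$ for some $z \in K$, iterating the contraction estimate \eqref{F-affine} (or \eqref{ifs} in general) yields
$$
|x_w - y| \;\le\; \lambda^m |x_0 - z| \;\le\; \lambda^m \operatorname{diam}(K),
$$
and $\operatorname{diam}(K)$ is finite by compactness.

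Third, since $f \in C(K)$ with $K$ compact, $f$ is uniformly continuous. Given $\varepsilon > 0$, choose $\delta > 0$ so that $|f(x) - f(y)| < \varepsilon$ whenever $|x-y| < \delta$, and then pick $m$ so large that $\lambda^m \operatorname{diam}(K) < \delta$. Substituting into the bound above and using $\sum_{|w|=m}\mu(K_w) = 1$ yields $|S_N - \int_K f\, d\mu| \le \varepsilon$, completing the argument.

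The only real obstacle is reconciling the weighting convention: the equal-weight formula $S_N = N^{-1}\sum_{|w|=m} f(x_w)$ matches the integral $\int_K f\, d\mu$ only when $\mu(K_w) = 1/d^m$ for every $w$, i.e., when $\mathbf{p} = (1/d,\dots,1/d)$. For a general Bernoulli weight vector one must instead define $S_N = \sum_{|w|=m} \mu(K_w)\, f(x_w)$; with that correction, the argument above goes through verbatim, and no further regularity of $f$ beyond continuity is required.
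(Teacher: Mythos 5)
Your proof is correct, but it is a genuinely different route from the paper's. The paper disposes of the lemma in one line by citing Theorem~2.5 of Infusino--Vol\v{c}i\v{c}, which asserts that the points $\{x_w:\ |w|=m\}$ form a uniformly distributed sequence on $K$; convergence of the averages for continuous $f$ is then immediate from the definition of uniform distribution. You instead give a self-contained Riemann-sum argument: split the integral over the partition $\{K_w\}$ using \eqref{overlap}, bound $\operatorname{diam}(K_w)$ by iterating the contraction estimate \eqref{ifs} (or \eqref{F-affine}), and invoke uniform continuity of $f$ on the compact $K$. This buys two things the citation does not: it is elementary and it is quantitative --- your bound is $\omega(f,\lambda^m\operatorname{diam}K)$ where $\omega$ is the modulus of continuity, which already contains the H\"{o}lder rate \eqref{Holder-general} stated later in Remark~\ref{rem.lambda}; what it loses is the connection to discrepancy and the Koksma--Hlawka framework that the paper discusses immediately after the lemma. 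Your caveat about the weighting is well taken and is a genuine (if minor) imprecision in the paper: the equal-weight average $N^{-1}\sum_{|w|=m}f(x_w)$ with $N=d^m$ converges to $\int_K f\,d\mu$ only when $\mu(K_w)=d^{-m}$, i.e., for uniform Bernoulli weights $\mathbf{p}=(1/d,\dots,1/d)$ (which does hold for the standard measure on ST used in the example); for general $\mathbf{p}$ one must weight the samples by $\mu(K_w)$ exactly as you propose.
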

\begin{proof}
  Points $\{x_w,\; |w|=m\}$ form a uniform sequence (cf.~\cite[Theorem~2.5]{InfVol09}).
\end{proof}

Furthermore, Theorem~3.1 in \cite{InfVol09} gives an estimate of the discrepancy, which
together with Koksma--Hlawka inequality would imply $O(N^{-1})$ convergence in \eqref{quasi}.
We are not aware of an analog of the Koksma-Hlawka inequality applicable to the problem
at hand\footnote{There is a variant of Koksma-Hlawka inequality for functions on fractals
  in \cite{MalStr18}, but it relies on harmonic splines, and does not apply to our setting.}.
% The result in \cite{MalStr18} requires computing harmonic splines \cite{StrUsh2000}.
% It is not clear how to apply
% it in our setting.
%and the discrepancy estimate in Theorem~3.1 \cite{InfVol09} yield
%\begin{equation}\label{quasi}
%  \left|S_N-\int_K\phi d\mu\right| =O(N^{-1})
%  \end{equation}
%  for any $\phi$ of bounded variation.
On the other hand, if one allows for a little  more regularity of $\phi$, the rate of convergence
for a slight modification
of Algorithm~II can be estimated. For simplicity, we formulate the algoritm and the convergence
result for functions ST used in the numerical example in the following section.
% The generalization of these ideas for functions on attractors of IFSs is straightforward.

\noindent\textbf{Algorithm III.} Let $T$ stand for an equilateral triangle with sides of length $1$ and
denote $T_w=F_w(T),\; w\in\Sigma_m$. Denote the vertices of $T_w$ by
$$
q^w_i=\pi(w\bar{i}),\quad  \phi_i^w = \phi( q_i), i\in [3].
$$
Set
\begin{align}\label{approx-int}
  S_N \doteq\frac{1}{3^{m+1}}  \sum_{|w|=m} \sum_{j=1}^3  \phi^w_j, \qquad N=3^m.
  \end{align}

  \begin{lemma}\label{lem.approx}
    Suppose $K$ is a ST and $\phi \in C(K)$ is a twice continuously differentiable function in directions
    $\tau_i, \; i\in [3]$.
    Then
    \begin{equation}\label{geom-rate}
      \left|  \int_K \phi d\mu - S_N\right|=O(h^2),\qquad h=2^{-m}.
    \end{equation}
    If $\phi$ is only H\"{o}lder continuous with exponent $\alpha\in (0,1]$. Then
    \begin{equation}\label{Holder-rate}
      \left|  \int_K \phi d\mu - S_N\right|=O(h^\alpha).
    \end{equation}
  \end{lemma}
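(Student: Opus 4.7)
The plan is to decompose the global error cellwise. Since $\mu(K_w) = 3^{-m}$ by \eqref{mu-cylinder} with the standard weights $\mathbf{p} = (1/3,1/3,1/3)$, and the partition covers $K$ up to a $\mu$-null overlap by \eqref{overlap}, I write
$$\int_K \phi\,d\mu - S_N = \sum_{|w|=m} 3^{-m}\bigl(I_w - Q_w\bigr), \qquad I_w \doteq \fint_{K_w}\phi\,d\mu,\quad Q_w \doteq \tfrac{1}{3}\sum_{j=1}^3 \phi^w_j.$$
Since the contraction ratio is $\lambda = 1/2$, each $K_w$ has diameter at most $h = 2^{-m}$, so every $q^w_j$ lies within distance $h$ of every point of $K_w$. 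The strategy is then to bound $|I_w - Q_w|$ locally and sum the $3^m$ contributions.

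The H\"older case is immediate: the hypothesis $|\phi(x) - \phi(q^w_j)| \le C h^\alpha$ for all $x \in K_w$ gives $|I_w - Q_w| \le C h^\alpha$, and summing over $|w|=m$ yields $|\int_K \phi\,d\mu - S_N| \le C h^\alpha$, proving \eqref{Holder-rate}.

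For the $C^2$ estimate, the main step is to show that the quadrature $Q_w$ is \emph{exact} on affine functions on each $K_w$. The standard self-similar measure on $K$ is invariant under the threefold rotational symmetry of $T$, so its $\mu$-centroid coincides with the geometric centroid $\bar v = \tfrac{1}{3}(v_1+v_2+v_3)$. Applying the affine map $F_w$ and using the change-of-variables identity $\fint_{K_w} g\,d\mu = \fint_K g\circ F_w\,d\mu$ gives
$$\fint_{K_w} x\,d\mu(x) = F_w(\bar v) = \tfrac{1}{3}\sum_{j=1}^3 F_w(v_j) = \tfrac{1}{3}\sum_{j=1}^3 q^w_j;$$
hence for affine $\phi$ one has $I_w = Q_w$. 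Given this, fix any $x_0 \in K_w$ and Taylor-expand $\phi(x) = \phi(x_0) + D\phi(x_0)(x-x_0) + R(x)$. The hypothesis of twice continuous differentiability along the directions $\tau_i$, which span $\R^2$, yields a uniform bound $|R(x)| \le C h^2$ on $K_w$; since the affine part contributes $0$ to $I_w - Q_w$, we obtain $|I_w - Q_w| \le C h^2$, and summation over the $3^m$ cells gives \eqref{geom-rate}.

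The crux is the exactness step: it relies essentially on the symmetric weights $\mathbf{p} = (1/3,1/3,1/3)$ together with the threefold symmetry of $T$, which together force the $\mu$-centroid of $K$ to coincide with the geometric centroid of its enclosing triangle. For asymmetric weights or a less symmetric IFS, this alignment would fail and only first-order accuracy would be recovered, explaining why the quadratic rate \eqref{geom-rate} is intrinsically tied to the standard measure on ST.
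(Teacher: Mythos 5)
Your route is genuinely different from the paper's. The paper never compares the quadrature to the integral cell by cell; instead it proves the recursion $S_{m+1}=S_m+O(h^2)$ by Taylor-expanding $\phi$ about the edge midpoints $q^w_{ij}$ \emph{along the segments} $[q^w_i,q^w_j]$ (which are contained in $K$), then telescopes and identifies the limit of the Cauchy sequence $\{S_m\}$ with $\int_K\phi\, d\mu$ via Lemma~\ref{lem.quasi}. Your H\"older argument is correct and arguably cleaner than the telescoping version. Your exactness computation is also right: the self-similarity relation $c=\frac{1}{3}\sum_i F_i(c)$ with $F_i(x)=\frac{1}{2}x+\frac{1}{2}v_i$ forces the $\mu$-centroid to equal $\bar v$, and $\fint_{K_w}x\,d\mu=F_w(\bar v)=\frac{1}{3}\sum_j q^w_j$ follows from $\mu\vert_{K_w}/\mu(K_w)=(F_w)_*\mu$. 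The closing observation about the role of the symmetric weights is a worthwhile point the paper does not make explicit.

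The gap is in the $C^2$ step. You invoke a full two-dimensional expansion $\phi(x)=\phi(x_0)+D\phi(x_0)(x-x_0)+R(x)$ with $|R(x)|\le Ch^2$ for \emph{every} $x\in K_w$. But $\phi$ is only defined on $K$, which has empty interior, and the hypothesis only provides second derivatives along the three edge directions, i.e.\ along line segments contained in $K$. The union of all such segments (the edges of all the $T_v$) is a $\mu$-null set: a single edge is covered at level $\ell$ by $2^\ell$ cells of measure $3^{-\ell}$ each. So the hypothesis gives no second-order control at $\mu$-almost every point of $K_w$ --- precisely the points that determine $I_w=\fint_{K_w}\phi\,d\mu$ --- and directional $C^2$ regularity along three directions does not upgrade to a joint second-order expansion on a fractal. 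This is exactly why the paper's proof is arranged so that only one-dimensional Taylor expansions along edges are ever used: the quantity it controls, $S_{m+1}-S_m$, involves only vertex and midpoint values, never an integral over a cell. Your argument becomes correct if the hypothesis is strengthened to ``$\phi$ is the restriction to $K$ of a $C^2$ function on a neighborhood of $K$'' (or on the convex hull $T$); under that reading it is a valid and more transparent alternative yielding \eqref{geom-rate}, but it does not prove the lemma under the stated directional hypothesis.
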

  \begin{remark}\label{rem.lambda} It is evident from the proof of the lemma that
    \eqref{Holder-rate} extends to H\"{o}lder continuous functions on attaractors of IFS \eqref{F-affine}:
  \begin{equation}\label{Holder-general}
      \left|  \int_K \phi d\mu - S_N\right|=O(\lambda^{\alpha m}), \; N=d^m.
    \end{equation} 
  \end{remark}
  
 \begin{proof} 
Denote
$$
q^w_{ij}= \frac{1}{2} \left( q^w_i+q^w_j\right)\quad \mbox{and}\quad   \phi^w_{ij}=\phi(q^w_{ij})\quad 0<i<j\le 3.
$$
By Taylor's formula, for $h=2^{-(m+1)}$ and $0<i<j\le 3$, we have
\begin{align*}
  \phi (q^w_i)& = \phi (q^w_{ij}) - D_{\tau_{ij}}\phi (q^w_{ij}) h +\frac{1}{2} D^2_{\tau_{ij}}\phi (\xi^w_{ij}) h^2,\\
  \phi (q^w_j)& = \phi (q^w_{ij}) + D_{\tau_{ij}}\phi (q^w_{ij}) h +\frac{1}{2} D^2_{\tau_{ij}}\phi (\eta^w_{ij}) h^2,
\end{align*}
where $\xi^w_{ij}, \eta^w_{ij}$ lie in the segment connecting $q^w_i$ and $q^w_j$, and
$D_{\tau}$ stands for the directional derivative. From here we conclude
that
\begin{equation}\label{Taylor}
  2 \phi (q^w_{ij}) =  \phi (q^w_i) +  \phi (q^w_j) + O(h^2).
\end{equation}
Recall
$$
S_m=\frac{1}{3^{m+1}} \sum_{|w|=m}\sum_{j=1}^3 \phi^w_j
$$
and note that
\begin{align} \nonumber
  S_{m+1}& =\frac{1}{3^{m+2}} \sum_{|w|=m} \left( \sum_{j=1}^3 \phi^w_j +\sum_{k\neq l} \phi^w_{kl}\right)\\
  & = \frac{1}{3^{m+1}} \sum_{|w|=m} \left( \sum_{j=1}^3 \phi^w_j  + O(h^2)\right)
  \label{martingale}
       = S_m+O(h^2),
\end{align}
where we used \eqref{Taylor} to obtain the second equality in \eqref{martingale}.
From \eqref{martingale}, recalling $h=2^{-(m+1)}$, we conclude that $\{S_m\}$ is a convergent
sequence with the rate specified in \eqref{geom-rate}.

To prove \eqref{Holder-rate}, replace \eqref{Taylor} with
$$
  2 \phi (q^w_{ij}) =  \phi (q^w_i) +  \phi (q^w_j) + O(h^\alpha)
$$
and continue as above.

\end{proof}

\section{Numerical example}\label{sec.example}
\setcounter{equation}{0}

In this section, we present numerical results illustrating analysis in the previous sections.
To this end, consider the following IVP
\begin{align}
  \label{nKM}
  \partial_t u(t,x) &= \int_K W(x,y) \left( u(t,y)-u(t,x)\right) d\mu(y),\\
                      \label{nKM-ic}
                      u(0,x) &=\left\{
                               \begin{array}{ll} 1, & x\in F_2(K),\\
                                 -1, & x\in F_1(K)\bigcup F_3(K),
                               \end{array}
                                      \right.
\end{align}
where $W(x,y)=\exp\{-2|x-y|^2\}$ and $K$ is ST (cf.~Example~\ref{ex.fractals}). We use the same notation for $W$
as a function on $\R^2$ and its restriction to $K$.

Next we discretize \eqref{nKM}, \eqref{nKM-ic} as explained in \S~\ref{sec.Galerkin}.
Specifically, we approximate $u(t,x)$ by a piecewise constant function in $x$, $u^m(t,x)$
(cf.~\eqref{sum}), and form a system of ODEs \eqref{galerkin}. The latter is solved by
the fourth order Runge-Kutta method with temporal step $\Delta t=10^{-3}$.
The spatial profile of the numerical solution in Figure~\ref{f.3}a.
To better visualize solution, we plot $u^m\left(t, h(x)\right)$ with 
$$
h: K\ni x\mapsto \bar x \in [0,1]
$$
such that
\begin{align*}
  x&=F_w (K), \quad w\in\Sigma,\\
  \bar x&= \sum_{i=1}^\infty \frac{w_i-1}{d^i}, \qquad\qquad \mbox{(cf.~\S\ref{sec.IFS})}.
\end{align*}
Note that $h$ is a measure preserving map from $(K,\mu)$ to $[0,1]$ equipped with
Lebesgue measure.
\begin{figure}
	\centering
 \textbf{a}\;	\includegraphics[width =.45\textwidth]{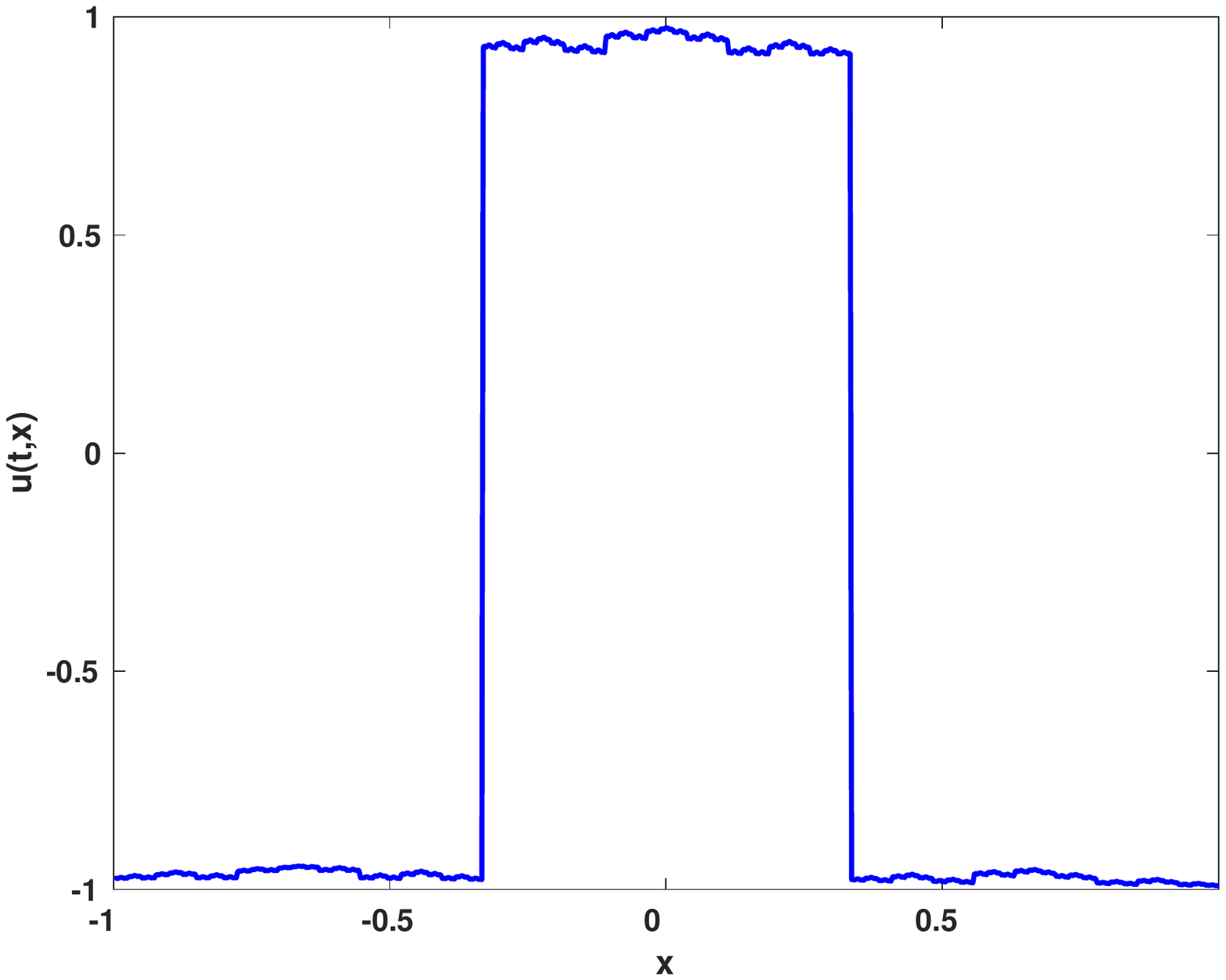}\qquad
  \textbf{b}\;      \includegraphics[width = .45\textwidth]{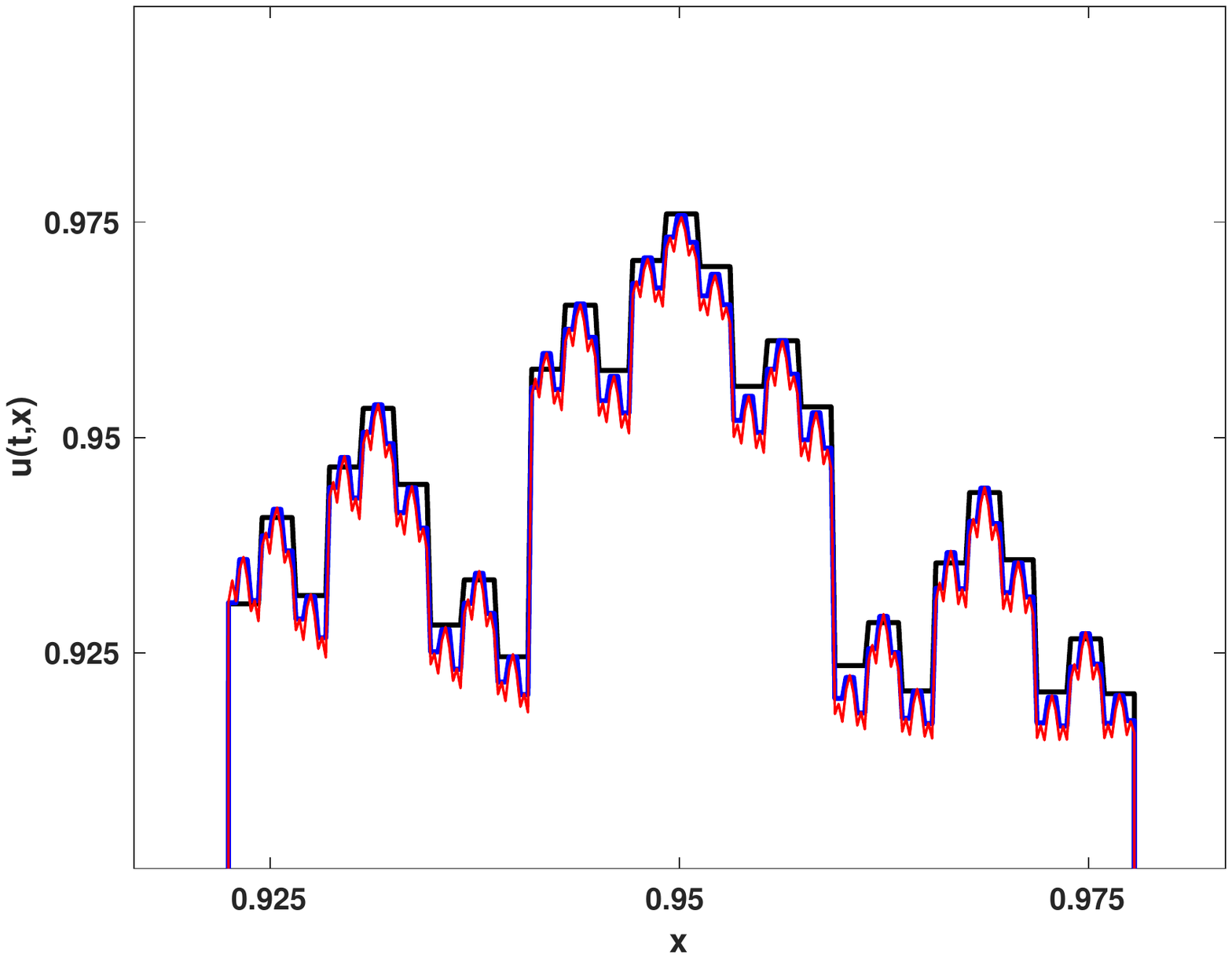}
  \caption{  \textbf{a}) Numerical solution of the IVP \eqref{nKM}, \eqref{nKM-ic} computed for $t=0.1$ and $m=6$.
    \textbf{b}) Numerical approximations $u^m(0.1,x)$ computed with $m=4$ (black), $m=5$ (blue), and $m=6$ (red).
  }
	\label{f.3}
\end{figure}

We chose a higher order Runge--Kutta scheme and integrated until a small time $t=0.1$ to discount
for the error due to the temporal discretization. The latter can be analyzed by standard techniques
for numerical integration of ODEs (see \cite[Section~6]{KVMed22} for the analysis of a closely related problem).
For the model at hand, the error of temporal discretization is dominated by the error of the Galerkin
scheme analyzed above. To estimate the rate of convergence of the numerical scheme, we assume
$$
\Delta^l\doteq \| u^{l+1}(t,\cdot)-u^{l}(t,\cdot)\|_{L^2(K,\mu)}\sim C\lambda^{\alpha l},
$$
as implied by \eqref{L2-conv}, \eqref{Lp-rate}. Then after computing $\Delta^l$ and
$\Delta^{l+1}$  we estimate
\begin{equation}\label{est-alpha}
  \alpha^l=\frac{\log\Delta^{l+1} - \log\Delta^{l}}{\log\lambda}.
\end{equation}  

Numerical estimates
$$
\alpha^3\approx 1.002, \alpha^4\approx 1.015, \alpha^5\approx 1.007 
$$
agree well with the theoretical estimate $\alpha=1$ obtained for smooth $W$ (cf.~\eqref{Lp-rate}).
It would be interesting 
to test convergence rate for $W\in\operatorname{Lip}\left(\alpha, L^p(K,\mu)\right)$ with fractional $\alpha\in (0,1)$.
% However, at the moment, we do not know a good example of such function.

\appendix
\section{Approximation of functions on the unit cube}\label{sec.Euc}
    \setcounter{equation}{0}

    In this appendix, we include the proof of  \eqref{rate-Lip} for functions defined on the unit $d-$cube $Q=[0,1]^d$, i.e.,
    in the Euclidean setting (cf.~\cite[Section~5]{KVMed22}). The generalization of this proof yields the rate of
    convergence estimate for functions
    on self--similar domains (see Section~\ref{sec.frac}).

    Let
    $f\in\Lip(L^p(Q), \alpha),\; \alpha\in (0,1], \; 1\le p<\infty,$ and approximate it by
    \be\lbl{fn}
    f^n(x) =\sum_{i\in [n]^d} \fint_{Q_i} f(z)dz \1_{Q_i}(x),
    \ee
    where
    $$
     Q_i=
    \left[\frac{i_1-1}{n},\frac{i_1}{n}\right)\times \dots\times\left[\frac{i_d-1}{n},\frac{i_d}{n}\right),\; i=(i_1,i_2,\dots,i_d).
    $$

    We want to estimate $ \|f-f^n\|^p_{L^p(Q)},\; p\ge 1.$ To utilize  self-similar structure of the unit cube,
    below we use a dyadic discretization of $Q$, i.e.,  $n:=2^m, \; m\in\N.$
    To avoid excessively cumbersome notation, for the remainder of this section, we denote $f^m:=f^{2^m}$.
    
 \begin{lemma}\lbl{lem.cube}
    Let  $f\in\Lip(L^p(Q), \alpha),\; \alpha\in (0,1], \; 1\le p<\infty.$ Then 
      \begin{equation}\label{rate-cube}  
        \|f-f^m\|^p_{L^p(Q)} \le C  \|f\|_{\Lip(L^p(Q),\alpha)} 2^{-\alpha m},
        \quad C=\frac{(2^d-1)^{1/p} 2^\alpha d^{\alpha/2} }{2^\alpha-1}.
       \end{equation}
       \end{lemma}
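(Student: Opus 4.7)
The plan is to mirror the strategy used for the fractal analogue (main lemma of \S\ref{sec.frac}), exploiting the self-similar dyadic structure of $Q = [0,1]^d$. The argument splits into two steps: (i) bound the one-step increment $\|f^m - f^{m+1}\|_{L^p(Q)}$ between successive dyadic refinements, and (ii) telescope over $m$ to obtain the bound on $\|f - f^m\|_{L^p(Q)}$.

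For step (i), I would decompose each dyadic cube $Q_i$ at level $m$ into its $2^d$ children $Q_{i,j}$, $j\in[2^d]$, at level $m+1$, and use the tower property
\[
\fint_{Q_i} f\,dz = \frac{1}{2^d}\sum_{j=1}^{2^d}\fint_{Q_{i,j}} f\,dz
\]
to express $(f^m-f^{m+1})|_{Q_{i,j}}$ as an average over $k\neq j$ of differences of the form $\fint_{Q_{i,j}} [f(y+\tau^m_{jk})-f(y)]\,dy$, where $\tau^m_{jk}$ is the translation from the center of $Q_{i,j}$ to the center of $Q_{i,k}$ (independent of $i$, with $|\tau^m_{jk}|_{\ell^\infty}\le 2^{-m}$). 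Raising to the $p$-th power, applying Jensen's inequality to pull the power inside the inner average, and summing over cubes $Q_i$ reconstitutes an $L^p$-integral of $f(\cdot+\tau^m_{jk})-f(\cdot)$ over a subset of $Q$ on which the translation is legitimate (since $Q_{i,j}$ and $Q_{i,k}$ share the parent $Q_i\subset Q$). Each such integral is dominated by $\omega_p(f,2^{-m}\sqrt{d})^p$, and collecting the combinatorial factor over pairs $(j,k)$ with $j\ne k$ together with the Lipschitz hypothesis yields
\[
\|f^m-f^{m+1}\|_{L^p(Q)} \le (2^d-1)^{1/p} d^{\alpha/2}\,\|f\|_{\Lip(L^p(Q),\alpha)}\,2^{-\alpha m}.
\]

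For step (ii), I would telescope: for any $M>m$,
\[
\|f^m-f^{m+M}\|_{L^p(Q)}\le \sum_{k=m}^\infty\|f^k-f^{k+1}\|_{L^p(Q)},
\]
and the geometric series with ratio $2^{-\alpha}$ sums to $\frac{2^\alpha}{2^\alpha-1}2^{-\alpha m}$. Since $f^n\to f$ in $L^p(Q)$, passing $M\to\infty$ gives the claimed bound with $C=\frac{(2^d-1)^{1/p}\,2^\alpha\,d^{\alpha/2}}{2^\alpha-1}$ (interpreting the LHS exponent $p$ in the stated inequality as a typographical artifact).

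The main obstacle is the bookkeeping of constants: tracking the $1/2^d$ factors from the double averaging, the combinatorial count of non-trivial index pairs $(j,k)\in[2^d]^2$ with $j\neq k$, and the conversion from the $\ell^\infty$-based definition of the modulus of continuity to the Euclidean diameter of $Q_i$ that supplies the factor $d^{\alpha/2}$. A subsidiary point that requires care is the validity of the change of variables $y\mapsto y+\tau^m_{jk}$ inside each $Q_{i,j}$, which amounts to verifying that the translated cube still lies in $Q$; this is immediate from the nesting $Q_{i,j}+\tau^m_{jk}=Q_{i,k}\subset Q_i\subset Q$, so no boundary cubes are lost in the summation.
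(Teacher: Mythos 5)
Your proposal follows essentially the same route as the paper's own proof in Appendix~\ref{sec.Euc}: the dyadic parent--child decomposition with the tower property, Jensen's inequality on the averaged translated differences, the bound by $\omega_p(f,\sqrt{d}\,h/2)$, and the telescoping geometric series with ratio $2^{-\alpha}$, arriving at the same constant $C$. Your observation that the exponent $p$ on the left-hand side of \eqref{rate-cube} is a typographical artifact is also consistent with the final display of the paper's argument, which bounds the norm itself rather than its $p$-th power.
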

       \begin{proof}  Let $h=2^{-m}$ and denote
         \begin{align*}
          Q^\prime_i & =
          \left[\frac{i_1-1}{n},\frac{i_1-1}{n} +\frac{h}{2}\right)\times \dots\times\left[\frac{i_d-1}{n},\frac{i_d-1}{n}+\frac{h}{2}\right),\\
           Q_{ij} &= Q_i^\prime+\frac{h}{2} j, \quad  i\in [n]^d,\; j\in\{0,1\}^d.
         \end{align*}
         The right-hand side of the definition of $Q_{ij}$ is interpreted as the translation of $Q_i^\prime$ by $\frac{h}{2} j$.

         With these definitions, we can express
         \begin{equation}\label{fm+1}
         f^{m+1}(x) =\sum_{i\in [n]^d}\sum_{j\in \{0,1\}^d} \fint_{Q_{ij}} f(z)dz \1_{Q_{ij}}(x).
         \end{equation}
         In the remainder of the proof, the ranges of the indices $i$ and $j$ will remain $[n]^d$ and $\{0,1\}^d$ respectively.
         They  will be omitted for brevity.
          \begin{align} \nonumber
            f^m (x) &=\sum_i \fint_{Q_i} f(z)dz \1_{Q_i}(x)\\
            \nonumber
                    & = \sum_i \left\{\left(2^{-d} \sum_{k\in\{0,1\}^d} \fint_{Q_{ik}} f(z) dz \right)
                      \left(\sum_j  \1_{Q_{ij}}(x)\right)\right\}\\
            \label{fm}
                  &= 2^{-d} \sum_{i,j,k} \fint_{Q_{ik}} f(z) dz \1_{Q_{ij}}(x).
          \end{align}
          From \eqref{fm+1} and \eqref{fm}, we have
          $$
          2^d  \left(f^{m+1}(x)-f^m(x)\right)= \sum_{i,j}\sum_{k\neq j} \left(\fint_{Q_{ij}}-\fint_{Q_{ik}}\right)f(z) dz \1_{Q_{ij}}(x).
          $$
          We continue with the following estimates
          \begin{align} \nonumber
            2^{dp} \| f^{m+1}-f^m\|_{L^p(Q)}^p & \le  2^{-d(m+1)}\sum_{i,j, k\neq j} \left|
                                                \fint_{Q^\prime_i} \left(f (s+\frac{h}{2}j) - f (s+\frac{h}{2}k) \right) ds \right|^p\\
            \nonumber
                                               &\le  2^{-d(m+1)}\sum_{i, k\neq j}
                                                 \left(\sum_i \fint_{Q^\prime_i} \left|f (s+\frac{h}{2}j) - f (s+\frac{h}{2}k) \right|^p ds\right)\\
            \nonumber
                                               & \le \sum_{i, k\neq j} \int_{Q^\prime}  \left|f (s) - f (s+\frac{h}{2}(k-j) ) \right|^p ds\\
            \label{m-step}
                                               & \le 2^d(2^d-1)\omega_p^p \left(f, \sqrt{d} \frac{h}{2}\right).
          \end{align}
          
          From \eqref{m-step}, we have
          \begin{equation}\label{mstep+}
          \| f^{m+1}-f^m\|_{L^p(Q)} \le (2^d-1)^{1/p} \|f\|_{\Lip(L^p(Q),\alpha)} d^{\alpha/2} 2^{-\alpha(m+1)}.
        \end{equation}
        We conclude that $\{f^m\}$ is a Cauchy sequence and
        $$
        \| f^m-f\|_{L^p(Q)}  \le \sum_{k=m}  \| f^{k+1}-f^k\|_{L^p(Q)} \le  \frac{(2^d-1)^{1/p} 2^\alpha d^{\alpha/2} }{2^\alpha-1}
        \|f\|_{\Lip(L^p(Q),\alpha)} 2^{-\alpha m}.
        $$
         
\end{proof}

\vskip 0.2cm
\noindent
{\bf Acknowledgements.} 
The author is grateful to Alexander Teplyaev for very helpful discussions
on the Galerkin scheme in the fractal setting.
This work was partially supported by NSF grant DMS 2009233.

\bibliographystyle{amsplain}
% \bibliography{nonloc}

\begin{thebibliography}{10}

\bibitem{BCCZ19}
Christian Borgs, Jennifer~T. Chayes, Henry Cohn, and Yufei Zhao, \emph{An
  {$L^p$} theory of sparse graph convergence {I}: {L}imits, sparse random graph
  models, and power law distributions}, Trans. Amer. Math. Soc. \textbf{372}
(2019), no.~5, 3019--3062.

\bibitem{Cha17}
S.~Chatterjee, \emph{Large deviations for random graphs}, Lecture Notes in
  Mathematics, vol. 2197, Springer, Cham, 2017, Lecture notes from the 45th
  Probability Summer School held in Saint-Flour, June 2015, \'{E}cole
  d'\'{E}t\'{e} de Probabilit\'{e}s de Saint-Flour. [Saint-Flour Probability
  Summer School]. 

\bibitem{Falc-Tech}
Kenneth Falconer, \emph{Techniques in fractal geometry}, John Wiley \& Sons,
  Ltd., Chichester, 1997. \MR{1449135}

\bibitem{Falc-FracGeom}
\bysame, \emph{Fractal geometry}, third ed., John Wiley \& Sons, Ltd.,
  Chichester, 2014, Mathematical foundations and applications. 

\bibitem{Hut81}
John~E. Hutchinson, \emph{Fractals and self-similarity}, Indiana Univ. Math. J.
  \textbf{30} (1981), no.~5, 713--747. 

\bibitem{InfVol09}
Maria Infusino and Aljo\v{s}a Vol\v{c}i\v{c}, \emph{Uniform distribution on
  fractals}, Unif. Distrib. Theory \textbf{4} (2009), no.~2, 47--58.
  

\bibitem{KVMed17}
Dmitry Kaliuzhnyi-Verbovetskyi and Georgi~S. Medvedev, \emph{The semilinear
  heat equation on sparse random graphs}, SIAM J. Math. Anal. \textbf{49}
  (2017), no.~2, 1333--1355. 

\bibitem{KVMed18}
\bysame, \emph{The {M}ean {F}ield {E}quation for the {K}uramoto {M}odel on
  {G}raph {S}equences with {N}on-{L}ipschitz {L}imit}, SIAM J. Math. Anal.
  \textbf{50} (2018), no.~3, 2441--2465. 

\bibitem{KVMed22}
\bysame, \emph{Sparse {M}onte {C}arlo method for nonlocal diffusion problems},
  SIAM J. Numer. Anal. \textbf{60} (2022), no.~6, 3001--3028. 

\bibitem{Kig01}
Jun Kigami, \emph{Analysis on fractals}, Cambridge Tracts in Mathematics, vol.
  143, Cambridge University Press, Cambridge, 2001. 
\bibitem{MalStr18}
Jens Malmquist and Robert~S. Strichartz, \emph{Numerical integration for
  fractal measures}, J. Fractal Geom. \textbf{5} (2018), no.~2, 165--226.

\bibitem{Med14a}
Georgi~S. Medvedev, \emph{The nonlinear heat equation on dense graphs and graph
  limits}, SIAM J. Math. Anal. \textbf{46} (2014), no.~4, 2743--2766.

\bibitem{Med14b}
\bysame, \emph{The nonlinear heat equation on {$W$}-random graphs}, Arch.
  Ration. Mech. Anal. \textbf{212} (2014), no.~3, 781--803. 

\bibitem{Med19}
\bysame, \emph{The continuum limit of the {K}uramoto model on sparse random
  graphs}, Communications in Mathematical Sciences \textbf{17} (2019), no.~4,
  883--898.

\bibitem{MedSim22}
Georgi~S. Medvedev and Gideon Simpson, \emph{A numerical method for a nonlocal
  diffusion equation with additive noise}, Stochastics and Partial Differential
  Equations: Analysis and Computations (2022).

\bibitem{Nikol-approximation}
S.~M. Nikolski\u{\i}, \emph{Approximation of functions of several
  variables and imbedding theorems}, Die Grundlehren der mathematischen
  Wissenschaften, Band 205, Springer-Verlag, New York-Heidelberg, 1975,
  Translated from the Russian by John M. Danskin, Jr. 

\bibitem{StrUsh2000}
Robert~S. Strichartz and Michael Usher, \emph{Splines on fractals}, Math. Proc.
  Cambridge Philos. Soc. \textbf{129} (2000), no.~2, 331--360. 

\bibitem{WilStr06}
D.A. Wiley, S.H. Strogatz, and M.~Girvan, \emph{The size of the sync basin},
  Chaos \textbf{16} (2006), no.~1, 015103, 8. 

\end{thebibliography}
\def\cprime{$'$} \def\cprime{$'$} \def\cprime{$'$}
\providecommand{\bysame}{\leavevmode\hbox to3em{\hrulefill}\thinspace}
\providecommand{\MR}{\relax\ifhmode\unskip\space\fi MR }
% \MRhref is called by the amsart/book/proc definition of \MR.
\providecommand{\MRhref}[2]{%
  \href{http://www.ams.org/mathscinet-getitem?mr=#1}{#2}
}
\providecommand{\href}[2]{#2}

\end{document}